
\documentclass[times]{speauth}

\usepackage{moreverb}

\usepackage{amsfonts,amssymb,amsthm,newlfont,graphicx,subfigure,algorithmic,algorithm,fixltx2e,booktabs,bm}
\usepackage{epstopdf}
\usepackage{empheq} 
\usepackage[colorlinks,bookmarksopen,bookmarksnumbered,citecolor=red,urlcolor=red]{hyperref}
\numberwithin{algorithm}{section}

\newcommand\BibTeX{{\rmfamily B\kern-.05em \textsc{i\kern-.025em b}\kern-.08em
T\kern-.1667em\lower.7ex\hbox{E}\kern-.125emX}}

%
\newtheorem{thm}{Theorem}[section]

\newtheorem{rem}{Remark}[section]

\numberwithin{equation}{section}
\renewcommand{\theequation}{\thesection.\arabic{equation}}
%

%
\def\simgt{\,\hbox{\lower0.6ex\hbox{$>$}\llap{\raise0.3ex\hbox{$\sim$}}}\,}
\def\simlt{\,\hbox{\lower0.6ex\hbox{$<$}\llap{\raise0.3ex\hbox{$\sim$}}}\,}
\def\simgteq{\,\hbox{\lower0.6ex\hbox{$\ge$}\llap{\raise0.6ex\hbox{$\sim$}}}\,}
\def\simlteq{\,\hbox{\lower0.6ex\hbox{$\le$}\llap{\raise0.6ex\hbox{$\sim$}}}\,}
\makeatletter
\def\user@resume{resume}
\def\user@intermezzo{intermezzo}
\newcounter{previousequation}
\newcounter{lastsubequation}
\newcounter{savedparentequation}
\setcounter{savedparentequation}{1}
\renewenvironment{subequations}[1][]{%
      \def\user@decides{#1}%
      \setcounter{previousequation}{\value{equation}}%
      \ifx\user@decides\user@resume
           \setcounter{equation}{\value{savedparentequation}}%
      \else
      \ifx\user@decides\user@intermezzo
           \refstepcounter{equation}%
      \else
           \setcounter{lastsubequation}{0}%
           \refstepcounter{equation}%
      \fi\fi
      \protected@edef\theHparentequation{%
          \@ifundefined {theHequation}\theequation \theHequation}%
      \protected@edef\theparentequation{\theequation}%
      \setcounter{parentequation}{\value{equation}}%
      \ifx\user@decides\user@resume
           \setcounter{equation}{\value{lastsubequation}}%
         \else
           \setcounter{equation}{0}%
      \fi
      \def\theequation  {\theparentequation  \alph{equation}}%
      \def\theHequation {\theHparentequation \alph{equation}}%
      \ignorespaces
}{%
  \ifx\user@decides\user@resume
       \setcounter{lastsubequation}{\value{equation}}%
       \setcounter{equation}{\value{previousequation}}%
  \else
  \ifx\user@decides\user@intermezzo
       \setcounter{equation}{\value{parentequation}}%
  \else
       \setcounter{lastsubequation}{\value{equation}}%
       \setcounter{savedparentequation}{\value{parentequation}}%
       \setcounter{equation}{\value{parentequation}}%
  \fi\fi
  \ignorespacesafterend
}
\makeatother
%

%
\makeatletter

\makeatother
\begin{document}

\runningheads{Kareem T. Elgindy}{High-Order Adaptive Gegenbauer Integral Spectral Element Method}

\title{High-Order Adaptive Gegenbauer Integral Spectral Element Method for Solving Nonlinear Optimal Control Problems}

\author{Kareem T. Elgindy\corrauth}

\address{Mathematics Department, Faculty of Science, Assiut University, Assiut 71516, Egypt}

\corraddr{Mathematics Department, Faculty of Science, Assiut University, Assiut 71516, Egypt.\\Email: kareem.elgindy@aun.edu.eg; kareem.elgindy@gmail.com.}

\begin{abstract}
In this work, we propose an adaptive spectral element algorithm for solving nonlinear optimal control problems. The method employs orthogonal collocation at the shifted Gegenbauer-Gauss points combined with very accurate and stable numerical quadratures to fully discretize the multiple-phase integral form of the optimal control problem. The proposed algorithm relies on exploiting the underlying smoothness properties of the solutions for computing approximate solutions efficiently. In particular, the method brackets discontinuities and ``points of nonsmoothness'' through a novel local adaptive algorithm, which achieves a desired accuracy on the discrete dynamical system equations by adjusting both the mesh size and the degree of the approximating polynomials. A rigorous error analysis of the developed numerical quadratures is presented. Finally, the efficiency of the proposed method is demonstrated on three test examples from the open literature.
\end{abstract}

\keywords{Adaptive strategy; Gegenbauer polynomials; Integration matrix; Optimal control problems; Spectral element methods.}

\maketitle

\vspace{-6pt}

\section{Introduction}
\label{int}
Optimal control theory has become one of the most dominant and indispensable techniques for analyzing dynamical systems in which optimal decisions are sought at each moment. Surely, the principal part in the establishment of the theory as an important and rich area of applied mathematics arises in the strong utilization of the subject area in a great breadth of applications and research areas such as engineering, computer science, astronautics, biological sciences, chemistry, agriculture, business, management, energy, path planning problems, and a host of many other areas; cf. \cite{Fatmawati2015,Mau2016,Kapur2012,Zheng2012,Chen2016,Hung2016,WANG2015,Peng2016,Elgindy2012c,Elgindy2013b}. 

The most popular analytical methods for solving optimal control problems such as the calculus of variations, Pontryagin's principle, and Bellman's principle, can generally solve only fairly simple problems. However, such methods are largely deficient to handle the increasing complexity of optimal control problems since the advent of digital computers, which led to a revolution in the development of numerical dynamic optimization methods over the past few decades. Among the popular numerical methods for solving optimal control problems, the so-called ``direct orthogonal collocation methods'' and ``direct pseudospectral methods'' have become two of the most universal and well established numerical dynamic optimization methods due to many merits they offer over other competitive methods in the literature; cf. \cite{Fahroo2002,Benson2006,Garg2011b,Elgindy2012d,Elgindy2013b,Elgindy2012c}. Both classes of numerical dynamic optimization methods convert the continuous optimal control problem into a finite dimensional constrained optimization problem based on the elegant spectral and pseudospectral methods, which are known to furnish exponential/spectral convergence rates faster than any polynomial convergence rate when the problem exhibits sufficiently smooth solutions; cf. \cite{Orszag1980,Canuto1988}.

Direct hp-pseudospectral methods were specifically designed to handle optimal control problems with discontinuous or nonsmooth states and controls; cf. \cite{Rao2010,Darby2011,Chai2015,Patterson2015,Patterson2014}. Such methods generally recover the prominent exponential convergence rates of pseudospectral methods by dividing the solution domain into an increasing number of mesh intervals and increasing the degree of the polynomial interpolant within each mesh interval. In particular, a local $p$-refinement is a suitable technique on regions where the solution is smooth, while a local $h$-refinement is preferable on elements where the solution is discontinuous/nonsmooth. To avoid high computational costs, adaptive strategies strive to control the locations of mesh intervals, minimize their number, and answer the question of whether increasing the number of collocation points within each mesh interval is necessary or not to achieve a certain accuracy threshold. 

In the most general formulation of an hp finite element method, the solution over each element is approximated by an arbitrary degree polynomial. The spectral element method uses instead a high-degree piecewise polynomial defined by an appropriate set of interpolation nodes or expansion modes. To achieve the highest interpolation accuracy, the interior interpolation nodes are distributed at positions corresponding to the zeros of certain families of orthogonal polynomials; cf. \cite{Pozrikidis2014}. While direct hp-pseudospectral methods were thoroughly investigated in the past few years, comparable literature for direct adaptive spectral element methods for solving special classes of optimal control problems is rather very few, and to the best of our knowledge, it seems that such methods do not exist for solving more general nonlinear optimal control problems. We acknowledge though the existence of some posteriori error analyses of hp finite element approximations of special forms of convex optimal control problems; cf. \cite{Chen2011,Gong2011}. Posteriori error estimates for the spectral element approximation of a linear quadratic optimal control problem in one dimension was recently presented by \cite{Ye2016}. However, all three papers lacked any adaptive strategies to efficiently implement their numerical schemes. Perhaps, the earliest and sole adaptive spectral element method for solving a special class of optimal control problems described by a quadratic cost functional and linear advection-diffusion state equation was put forward by \cite{Gaudio2011}. In their presented work, an approximate saddle point of the Lagrangian functional is sought by iterating on the Karush-Kuhn-Tucker optimality conditions to seek their satisfaction numerically using a Galerkin spectral element method for the space discretization. The adaptive algorithm relies on a posteriori error estimate of the cost functional, from which the parameters of the spectral element discretization are selected. 

The main purpose of this paper is to derive high-order numerical solutions of nonlinear optimal control problems exhibiting smooth/nonsmooth solutions using a novel direct adaptive Gegenbauer integral spectral element (GISE) method. In particular, the proposed method converts the nonlinear optimal control problem into an integral multiple-phase optimal control problem. The multiple-phases are then connected using state continuity linkage conditions with easily incorporated control continuity linkage conditions when the control functions are assumed continuous. The numerical discretization is carried out using truncated shifted Gegenbauer series expansions and a novel numerical quadrature defined on each mesh interval-- henceforth called the $k$th element shifted optimal barycentric Gegenbauer quadrature (KESOBGQ)-- based on the stable barycentric representation of Lagrange interpolating polynomials. Such a quadrature can produce excellent approximations while significantly reducing the number of operational costs required for the evaluation of the involved integrals. The proposed method is further invigorated by a novel adaptive strategy that uses a multicriterion for locating the mesh intervals where the state and control functions are smooth/nonsmooth based on information derived from the residual of the discrete dynamical system equations, and  the magnitude of the last coefficients in the state and control truncated series. In fact, the idea of using the spectral coefficients of the state trajectories as a measure to verify the convergence of the computed solution was previously presented by \cite{Gong2006}. Nonetheless, in this article, we shall exploit the spectral coefficients instead to check the smoothness of the approximate solutions on the interval of interest. The proposed method generally produces a small/medium-scale nonlinear programming problem that could be easily solved using the current powerful numerical optimization methods. The current paper casts further the light on the judicious choice of the shifted Gegenbauer-Gauss collocation points set to be utilized on each mesh interval during the discretization process of optimal control problems based on numerical simulations.   

The remaining part of the paper is organized as follows: In Section \ref{sec:ps}, we describe the optimal control problem statement under study. In Section \ref{sec:tgsem}, we present our novel GISE method. A novel adaptive strategy is presented in Section \ref{subsec:as1}. Section \ref{subsec:err} is devoted for the error analysis and convergence properties of the KESOBGQ. In Section \ref{sec:ne}, three test examples of nonlinear optimal control problems are included to demonstrate the efficiency and the accuracy of the proposed GISE method followed by some concluding remarks illustrating the advantages of the proposed GISE method in Section \ref{conc}.
\section{Problem statement}
\label{sec:ps}
Consider the nonlinear time-varying dynamical system
\begin{equation}\label{eq:dynsys1}
	\bm{\dot x}(t) = \bm{f}(\bm{x}(t),\bm{u}(t),t),\quad {t_0} \le t \le {t_f},
\end{equation}
where $\bm{x}(t) \in {\mathbb{R}^{{n_x}}}$ and $\bm{u}(t) \in {\mathbb{R}^{{n_u}}}$ are the state and control vector functions for some $n_x, n_u \in \mathbb{Z}^+$, respectively; $\bm{\dot x}(t)$ is the vector of first-order time derivatives of the states; $t_0 \in \mathbb{R}$ is the initial time, $t_f \in \mathbb{R}: t_f > t_0$ is the terminal time. The problem is to find the optimal control $\bm{u}^*(t)$ and the corresponding state trajectory $\bm{x}^*(t), t_0 \le t \le t_f$ satisfying Eq. \eqref{eq:dynsys1} while minimizing the cost functional
\begin{equation}\label{eq:costfn1}
	J = \phi \left( {\bm{x}({t_0}),{t_0},\bm{x}({t_f}),{t_f}} \right) + \int_{{t_0}}^{{t_f}} {\mathcal{L}(\bm{x}(t),\bm{u}(t),t)\,dt} ,
\end{equation}
subject to the mixed state and control path constraints
\begin{equation}\label{ineq:pathcons1}
	{\bm{C}_{\min }} \le \bm{C}(\bm{x}(t),\bm{u}(t),t) \le {\bm{C}_{\max }},
\end{equation}
and the boundary conditions
\begin{equation}\label{eq:bound1}
	\psi \left( {\bm{x}({t_0}),{t_0},\bm{x}({t_f}),{t_f}} \right) = \bm{0},
\end{equation}
where $\phi :{\mathbb{R}^{{n_x}}} \times \mathbb{R} \times {\mathbb{R}^{{n_x}}} \times \mathbb{R} \to \mathbb{R}$ is the terminal cost function, ${\cal L}:{\mathbb{R}^{{n_x}}} \times {\mathbb{R}^{{n_u}}} \times \mathbb{R} \to \mathbb{R}$ is the Lagrangian function, $\bm{f}:{\mathbb{R}^{{n_x}}} \times {\mathbb{R}^{{n_u}}} \times \mathbb{R} \to {\mathbb{R}^{{n_x}}}$ is a nonlinear vector field, $\bm{C}:{\mathbb{R}^{{n_x}}} \times {\mathbb{R}^{{n_u}}} \times \mathbb{R} \to {\mathbb{R}^{{n_C}}}$ is a mixed inequality constraint vector on the state and control functions for some $n_C \in \mathbb{Z}^+$; ${\bm{C}_{\min }}, {\bm{C}_{\max }} \in {\mathbb{R}^{{n_C}}}$ are constant specified vectors, and $\psi :{\mathbb{R}^{{n_x}}} \times \mathbb{R} \times {\mathbb{R}^{{n_x}}} \times \mathbb{R} \to {\mathbb{R}^{{n_\psi }}}$ is a boundary constraint vector for some $n_{\psi} \in \mathbb{Z}^+$. Here it is assumed that $\phi, \cal{L}$, and each system function $f_i$ are nonlinear continuously differentiable functions with respect to $\bm{x}$. It is also assumed that the nonlinear optimal control problem \eqref{eq:dynsys1}--\eqref{eq:bound1} has a unique solution with possibly discontinuous/nonsmooth optimal control. We shall refer to the above optimal control problem in Bolza form by Problem 1.

\section{The GISE method}
\label{sec:tgsem}
Using the affine transformation 
\begin{equation}\label{eq:affine1}
	\tau  = \frac{{2\,t - {t_0} - {t_f}}}{{{t_f} - {t_0}}},
\end{equation}
we could easily rewrite Problem 1 as follows: 
\begin{subequations}\label{pr:2}
\begin{equation}
	{\text{Minimize }}J = \phi \left( {\tilde {\bm{x}}( - 1),{t_0},\tilde {\bm{x}}(1),{t_f}} \right) + \frac{{{t_f} - {t_0}}}{2}\int_{ - 1}^1 {\tilde {\cal{L}}(\tilde {\bm{x}}(\tau ),\tilde {\bm{u}}(\tau ),\tau )\,d\tau }
	\end{equation}
	\text{subject to }	
	\begin{equation}
		\skew{3}{\dot}{\skew{3}{\tilde}{\bm{x}}}(\tau) = \frac{{{t_f} - {t_0}}}{2}\tilde {\bm{f}}(\tilde {\bm{x}}(\tau ),\tilde {\bm{u}}(\tau ),\tau ),\quad \tau  \in [ - 1,1],
	\end{equation}	
	\begin{equation}
		{\bm{C}_{\min }} \le \tilde {\bm{C}}(\tilde {\bm{x}}(\tau ),\tilde {\bm{u}}(\tau ),\tau ) \le {\bm{C}_{\max }},\quad \tau  \in [ - 1,1],
	\end{equation}
	\begin{equation}
		\psi \left( {\tilde {\bm{x}}( - 1),{t_0},\tilde {\bm{x}}(1),{t_f}} \right) = \bm{0},
	\end{equation}
\end{subequations}
where $\tilde \eta (\tau ) = \eta \left( {(({t_f} - {t_0})\,\tau  + {t_0} + {t_f})/2} \right),$ for all $\eta  \in \left\{ {\bm{x},\bm{u},\cal{L},\bm{f},\bm{C}} \right\}$. We refer to the optimal control problem described by Eqs. \eqref{pr:2} by Problem 2.

One of the primary advantages of spectral element methods is the ability to resolve complex geometries and problems exhibiting discontinuous/nonsmooth solutions with high-order accuracies through the decomposition of the solution interval into small mesh intervals or elements ``$h$-refinement,'' and approximating the restricted solution function on each mesh interval with high-order truncated spectral expansion series ``$p$-refinement.'' Considering the solution interval $[-1,1]$, we can partition it into $K$ mesh intervals $\bm{\Omega}_k, k \in \mathbb{K} = \{1, \ldots, K\}$ using $K+1$ mesh points $\tau_k, k = 0, \ldots, K$ distributed along the interval $[-1,1]$:
\[[ - 1,1] = \bigcup\limits_{k = 1}^K {{\mkern 1mu} {\bm{\Omega} _k}} ,\quad {\bm{\Omega} _k} = [{\tau _{k - 1}},{\tau _k}],\quad - 1 = {\tau _0} < {\tau _1} <  \ldots  < {\tau _K} = 1.\]
For simplicity of notation, we explicitly write $\tau ^{(k)}$ to denote the restricted variable $\tau$ whose values are confined to ${\bm{\Omega} _k}$; i.e.
$\tau^{(k)} = \tau: \tau_{k-1} \le \tau \le \tau_k$. Moreover, we denote the state and control vector functions in the $k$th element by ${{\tilde {\bm{x}}}^{(k)}}\left( {{\tau ^{(k)}}} \right)$ and ${{\tilde {\bm{u}}}^{(k)}}\left( {{\tau ^{(k)}}} \right)$, respectively. Based on this initial setting, we can put Problem 2 into its multiple-interval form as follows:
\begin{subequations}\label{pr:3}
\begin{equation}\label{eq:costfn3}
	{\text{Minimize }}J = \phi \left( {\tilde {\bm{x}}^{(1)}( - 1),{t_0},\tilde {\bm{x}}^{(K)}(1),{t_f}} \right) + \frac{{{t_f} - {t_0}}}{2} \sum\limits_{k = 1}^K {\int_{ \tau_{k-1}}^{\tau_k} {\tilde {\cal{L}}\left(\tilde {\bm{x}}^{(k)}\left(\tau^{(k)} \right),\tilde {\bm{u}}^{(k)}\left(\tau^{(k)} \right),\tau^{(k)} \right)\,d\tau^{(k)} }}
\end{equation}	
	\text{subject to }
\begin{equation}\label{eq:dynsys3}
	\skew{3}{\dot}{\skew{3}{\tilde}{\bm{x}}}^{(k)}\left(\tau^{(k)}\right) = \frac{{{t_f} - {t_0}}}{2}\tilde {\bm{f}}\left(\tilde {\bm{x}}^{(k)}\left(\tau^{(k)} \right),\tilde {\bm{u}}^{(k)}\left(\tau^{(k)} \right),\tau^{(k)} \right),
\end{equation}
	\begin{equation}\label{eq:mixpathcons3}
		{\bm{C}_{\min }} \le \tilde {\bm{C}}\left(\tilde {\bm{x}}^{(k)}\left(\tau^{(k)} \right),\tilde {\bm{u}}^{(k)}\left(\tau^{(k)} \right),\tau^{(k)} \right) \le {\bm{C}_{\max }},
	\end{equation}
\begin{equation}\label{eq:tercons3}
	\psi \left( {\tilde {\bm{x}}^{(1)}( - 1),{t_0},\tilde {\bm{x}}^{(K)}(1),{t_f}} \right) = \bm{0}.
\end{equation}
\end{subequations}
To take advantage of the well-conditioning of numerical integration operators, we further rewrite Eq. \eqref{eq:dynsys3} in its integral formulation so that
\begin{equation}\label{eq:dynsys4}
	{{\tilde {\bm{x}}}^{(k)}}\left( {{\tau ^{(k)}}} \right) = {{\tilde {\bm{x}}}^{(k)}}\left( {{\tau _{k - 1}}} \right) + \frac{{{t_f} - {t_0}}}{2}\int_{{\tau _{k - 1}}}^{{\tau ^{(k)}}} {\tilde {\bm{f}}\left( {{{\tilde {\bm{x}}}^{(k)}}\left( {{\tau ^{(k)}}} \right),{{\tilde {\bm{u}}}^{(k)}}\left( {{\tau ^{(k)}}} \right),{\tau ^{(k)}}} \right)\,d{\tau ^{(k)}}},\quad k \in \mathbb{K}.
\end{equation}
To impose the states continuity conditions, the following conditions must be fulfilled at the interface of any two consecutive mesh intervals:
\begin{equation}\label{eq:statecont2}
	{{\tilde {\bm{x}}}^{(k - 1)}}\left( {{\tau _{k - 1}}} \right) = {{\tilde {\bm{x}}}^{(k)}}\left( {{\tau _{k - 1}}} \right),\quad k = 2, \ldots ,K.
\end{equation}
If the control vector function is assumed to be continuous, then we further add the following constraints:
\begin{equation}
		{{\tilde {\bm{u}}}^{(k - 1)}}\left( {{\tau _{k - 1}}} \right) = {{\tilde {\bm{u}}}^{(k)}}\left( {{\tau _{k - 1}}} \right),\quad k = 2, \ldots ,K.\label{eq:controlcont2}
\end{equation}
We refer to the optimal control problem \eqref{eq:costfn3}, \eqref{eq:mixpathcons3}, \eqref{eq:tercons3}, \eqref{eq:dynsys4} provided with 
Conditions \eqref{eq:statecont2} and \eqref{eq:controlcont2} by Problem 3.

Let $\mathbb{Z}^+_0 = \mathbb{Z}^+ \cup \{0\}, \mathbb{R}^+_0 = \mathbb{R}^+ \cup \{0\}$, and $\hat G_{j,k}^{(\alpha )}\left( {{\tau ^{(k)}}} \right) = G_{j}^{(\alpha )}\left( {\left( {2\,{\tau ^{(k)}} - {\tau _{k - 1}} - {\tau _k}} \right)/({\tau _k} - {\tau _{k - 1}})} \right)$, for all $j \in \mathbb{Z}^+_0$ be the $j$th-degree shifted Gegenbauer polynomial defined on the mesh interval $\bm{\Omega}_k$, for all $k \in \mathbb{K}$-- henceforth referred to by the $j$th-degree $k$th element shifted Gegenbauer polynomial, where $G_{j}^{(\alpha )}(\tau)$ is the classical $j$th-degree Gegenbauer polynomial associated with the real parameter $\alpha > -1/2$; cf. \cite{Elgindy2013,Elgindy2016a}. Moreover, let $\mathbb{S}_{N_k}^{(\alpha)} = \left\{\hat \tau_{N_k,l}^{(k),\alpha}, l = 0, \ldots, N_k\right\}$ denote the set of the zeroes\footnote{The $k$th element shifted Gegenbauer-Gauss (KESGG) nodes in $\bm{\Omega}_k$, for all $k \in \mathbb{K}$.} of the $(N_k+1)$th-degree $k$th element shifted Gegenbauer polynomial, $\hat G_{{N_k} + 1,k}^{(\alpha )}\left( {{\tau ^{(k)}}} \right)$, for some $N_k \in \mathbb{Z}^+, k \in \mathbb{K}$, and set $\hat \tau _{{N_k},{N_k} + 1}^{(k),\alpha } = {\tau _k}$, for all $ k$. The $k$th element shifted Gegenbauer polynomials $\hat G_{n,k}^{(\alpha )}\left( {{\tau ^{(k)}}} \right), n \in \mathbb{Z}^+_0$ form a complete $L_{w_k^{(\alpha)}}^2\left(\bm{\Omega}_k\right)$-orthogonal system with respect to the weight function
\begin{equation}
	w_k^{(\alpha )}\left({\tau ^{(k)}}\right) = {\left( {{\tau _k} - {\tau ^{(k)}}} \right)^{\alpha  - 1/2}}{\left( {{\tau ^{(k)}} - {\tau _{k - 1}}} \right)^{\alpha  - 1/2}},
\end{equation}
 and their orthogonality relation is defined by the following weighted inner product:
\begin{equation}
	{\left( {\hat G_{m,k}^{(\alpha )},\hat G_{n,k}^{(\alpha )}} \right)_{w_k^{(\alpha )}}} = \int_{{\tau _{k - 1}}}^{{\tau _k}} {\hat G_{m,k}^{(\alpha )}\left( {{\tau ^{(k)}}} \right)\hat G_{n,k}^{(\alpha )}\left( {{\tau ^{(k)}}} \right){\mkern 1mu} w_k^{(\alpha )}\left( {{\tau ^{(k)}}} \right)d{\tau ^{(k)}}}  = \left\| {\hat G_{n,k}^{(\alpha )}} \right\|_{w_k^{(\alpha )}}^2{\delta _{m,n}} = \lambda _{k,n}^{(\alpha )}{\delta _{m,n}}\quad \forall m, n \in \mathbb{Z}^+_0,
\end{equation}
where $\delta _{m,n}$ is the Kronecker delta function, 
\begin{equation}
\lambda _{k,n}^{(\alpha )} = {\left( {\frac{{{\tau _k} - {\tau _{k - 1}}}}{2}} \right)^{2\alpha }}\lambda _n^{(\alpha )},
\end{equation}
is the normalization factor, and $\lambda _n^{(\alpha )}$ is as defined by \cite[Eq. (2.6)]{Elgindy2016}. For $\alpha = 0$ and $0.5$, we recover the shifted Chebyshev polynomials of the first kind and the shifted Legendre polynomials, respectively, on each mesh interval $\bm{\Omega}_k, k \in \mathbb{K}$. Let $L_{x,k}, L_{u,k} \in \mathbb{Z}_0^+$, for all $ k \in \mathbb{K}$,  
\[\hat{\bm{ G}}_{L,k}^{(\alpha )}\left( {{\tau ^{(k)}}} \right) = \left[ {\hat{\bm{ G}}_{0,k}^{(\alpha )}\left( {{\tau ^{(k)}}} \right),\hat{\bm{ G}}_{1,k}^{(\alpha )}\left( {{\tau ^{(k)}}} \right), \ldots ,\hat{\bm{ G}}_{L,k}^{(\alpha )}\left( {{\tau ^{(k)}}} \right)} \right]\quad \forall L \in \mathbb{Z}_0^ + ,\]
and denote the identity matrix of order $n$ by $\mathbf{I}_n$, for all $ n \in \mathbb{Z}^+$. Moreover, define ${{\bm{a}}^{(k)}} = \text{vec}\left[ {{\bm{a}}_1^{(k)},{\bm{a}}_2^{(k)}, \ldots ,{\bm{a}}_{{n_x}}^{(k)}} \right]$ and ${{\bm{b}}^{(k)}} = \text{vec}\left[ {{\bm{b}}_1^{(k)},{\bm{b}}_2^{(k)}, \ldots ,{\bm{b}}_{{n_u}}^{(k)}} \right]$ as the spectral coefficient vectors obtained through collocating the state and control vectors 
at the augmented KESGG nodes $\hat \tau_{N_k,i}^{(k),\alpha} \in \mathbb{S}_{N_k}^{(\alpha)}, i = 0, \ldots, N_k+1$, respectively, where ${\bm{a}}_r^{(k)} = {\left[ {a_{r,0}^{(k)},a_{r,1}^{(k)}, \ldots ,a_{r,{L_{x,k}}}^{(k)}} \right]^T},{\bm{b}}_s^{(k)} = {\left[ {b_{s,0}^{(k)},b_{s,1}^{(k)}, \ldots ,b_{s,{L_{u,k}}}^{(k)}} \right]^T}$, for all $ k \in \mathbb{K}, r = 1, \ldots, n_x; s = 1, \ldots, n_u$, and ``vec'' denotes the vectorization of a matrix. As promised by the Sturm-Liouville theorem, we can represent any square integrable function as an infinite series in the shifted Gegenbauer polynomials; therefore, we can approximate the state and control vector functions as follows:
\begin{subequations}\label{eq:statesandcontrolsapprox1}
\begin{align}
	{{\tilde {\bm{x}}}^{(k)}}\left( {{\tau ^{(k)}}} \right) \approx {{\hat {\bm{x}}}^{(k)}}\left( {{\tau ^{(k)}}} \right) &= \left( {{\mathbf{I}_{{n_x}}} \otimes \hat{\bm{ G}}_{{L_{x,k}},k}^{(\alpha )}\left( {{\tau ^{(k)}}} \right)} \right){{\bm{a}}^{(k)}},\\
	{{\tilde {\bm{u}}}^{(k)}}\left( {{\tau ^{(k)}}} \right) \approx {{\hat {\bm{u}}}^{(k)}}\left( {{\tau ^{(k)}}} \right) &= \left( {{\mathbf{I}_{{n_u}}} \otimes \hat{\bm{ G}}_{{L_{u,k}},k}^{(\alpha )}\left( {{\tau ^{(k)}}} \right)} \right){{\bm{b}}^{(k)}},
\end{align}
\end{subequations}
where ``$\otimes$'' denotes the Kronecker product. Let $M_k \in \mathbb{Z}^+ $, for all $ k \in \mathbb{K}, {\mathbf{P}}_{OB}^{(1)} \in \mathbb{R}^{(N_k+2) \times (M_k+1)}$ denote the first-order optimal barycentric Gegenbauer integration matrix (OBGIM); cf. \cite{Elgindy2016a}. With a simple mathematical manipulation, we can easily show that the first-order $k$th element shifted optimal barycentric Gegenbauer integration matrix (KESOBGIM), ${}_k{\mathbf{P}}_{OB}^{(1)}$, on $\bm{\Omega}_k$ is related to the OBGIM by the following useful relation:
\begin{equation}
	{}_k{\mathbf{P}}_{OB}^{(1)} = \frac{1}{2}({\tau _k} - {\tau _{k - 1}}){\mathbf{P}}_{OB}^{(1)}\quad \forall k \in \mathbb{K}.
\end{equation}
We shall refer to the set $\mathbb{T}_{N_k,M_k} = \left\{ \hat z_{M_k,i,j}^{(k),\alpha _i^{(k),*}} ,i = 0, \ldots ,N_k+1; j = 0,  \ldots ,M_k\right\}$ by the set of adjoint KESGG points on $\bm{\Omega}_k$, for all $ k \in \mathbb{K}$ such that $\alpha _i^{(k),*}, i = 0, \ldots ,N_k+1$ are the associated optimal Gegenbauer parameters; cf. \cite{Elgindy2016}. Denote the $i$th row of the KESOBGIM, $\left[ {{}_kp_{OB,i,0}^{(1)}, \ldots ,{}_kp_{OB,i,{M_k}}^{(1)}} \right]$, by ${}_k{\mathbf{P}}_{OB,i}^{(1)}$, for all $ i = 0,\ldots, N_k+1$. The sought discrete cost function $J^{(\alpha)}_{N, L_x, L_u}$ can be written as
\begin{equation}\label{eq:disccost1}
	{J^{(\alpha)}_{N,{L_x},{L_u}}} = \phi \left( {\left( {{{\mathbf{I}}_{{n_x}}} \otimes {{\left( {{\bm{1}}_{{L_{x,1}} + 1}^{( - )}} \right)}^T}} \right){{\bm{a}}^{(1)}},{t_0},\left( {{{\mathbf{I}}_{{n_x}}} \otimes {\bm{1}}_{{L_{x,K}} + 1}^T} \right){{\bm{a}}^{(K)}},{t_f}} \right) + \frac{{{t_f} - {t_0}}}{2}\sum\limits_{k = 1}^K {{}_k{\mathbf{P}}_{OB,{N_k} + 1}^{(1)}\,{\bm{\chi} ^{(k)}}} ,
\end{equation}
where ${{\bm{1}}_L} \in {\mathbb{R}^L}$ is the all ones vector, ${\bm{1}}_L^{( - )} \in {\mathbb{R}^L}:{\left( {{\bm{1}}_L^{( - )}} \right)_i} = {( - 1)^{i - 1}},i = 1, \ldots ,L,$ is the all alternating ones vector for all $L \in \mathbb{Z}^+$, and 
\begin{align}
	{\bm{\chi} ^{(k)}} = \left[ \tilde {\cal{L}}\left( {\left( {{{\mathbf{I}}_{{n_x}}} \otimes \hat {\bm{G}}_{{L_{x,k}},k}^{(\alpha )}\left( {\hat z_{{M_k},{N_k} + 1,0}^{(k),\alpha _{{N_k} + 1}^{(k),*}}} \right)} \right){{\bm{a}}^{(k)}},\left( {{{\mathbf{I}}_{{n_u}}} \otimes \hat {\bm{G}}_{{L_{u,k}},k}^{(\alpha )}\left( {\hat z_{{M_k},{N_k} + 1,0}^{(k),\alpha _{{N_k} + 1}^{(k),*}}} \right)} \right){{\bm{b}}^{(k)}},\hat z_{{M_k},{N_k} + 1,0}^{(k),\alpha _{{N_k} + 1}^{(k),*}}} \right), \ldots ,\right.\nonumber\\
	\left.\tilde {\cal{L}}\left( {\left( {{{\mathbf{I}}_{{n_x}}} \otimes \hat {\bm{G}}_{{L_{x,k}},k}^{(\alpha )}\left( {\hat z_{{M_k},{N_k} + 1,{M_k}}^{(k),\alpha _{{N_k} + 1}^{(k),*}}} \right)} \right){{\bm{a}}^{(k)}},\left( {{{\mathbf{I}}_{{n_u}}} \otimes \hat {\bm{G}}_{{L_{u,k}},k}^{(\alpha )}\left( {\hat z_{{M_k},{N_k} + 1,{M_k}}^{(k),\alpha _{{N_k} + 1}^{(k),*}}} \right)} \right){{\bm{b}}^{(k)}},\hat z_{{M_k},{N_k} + 1,{M_k}}^{(k),\alpha _{{N_k} + 1}^{(k),*}}} \right) \right]^T.
\end{align}
To account for the state continuity conditions \eqref{eq:statecont2}, the discrete integral dynamical system equations on the elemental domains (or simply elements) can be approximated by
\begin{subequations}\label{eq:discdynsys1}
\begin{equation}
\left( {{{\mathbf{I}}_{{n_x}}} \otimes \left( {\hat{\bm{ G}}_{{L_{x,1}},1}^{(\alpha )}\left( {\hat \tau _{{N_1},i}^{(1),\alpha }} \right) - {{\left( {{\bm{1}}_{{L_{x,1}} + 1}^{( - )}} \right)}^T}} \right)} \right){{\bm{a}}^{(1)}} - \frac{{{t_f} - {t_0}}}{2}\left( {{{\mathbf{I}}_{{n_x}}} \otimes {}_1{\mathbf{P}}_{OB,i}^{(1)}} \right)\hat{\bm{F}}_i^{(1)} = \bm{0},\quad i = 0, \ldots ,{N_1} + 1,
\end{equation}
\begin{align}
\left( {{{\mathbf{I}}_{{n_x}}} \otimes \hat{\bm{ G}}_{{L_{x,k}},k}^{(\alpha )}\left( {\hat \tau _{{N_k},i}^{(k),\alpha }} \right)} \right){{\bm{a}}^{(k)}} - \left( {{{\mathbf{I}}_{{n_x}}} \otimes {\bm{1}}_{{L_{x,k - 1}} + 1}^T} \right){{\bm{a}}^{(k - 1)}} - \frac{{{t_f} - {t_0}}}{2}\left( {{{\mathbf{I}}_{{n_x}}} \otimes {}_k{\mathbf{P}}_{OB,i}^{(1)}} \right)\hat {\bm{F}}_i^{(k)} = \bm{0},\quad &i = 0, \ldots ,{N_k} + 1;\nonumber\\ &k = 2, \ldots ,K,
\end{align}
\end{subequations}
where 
\begin{align}
	\hat {\bm{F}}_i^{(k)} &= \left[ {{\tilde f}_1}\left( {\left( {{{\mathbf{I}}_{{n_x}}} \otimes \hat{\bm{ G}}_{{L_{x,k}},k}^{(\alpha )}\left( {\hat z_{{M_k},i,0}^{(k),\alpha _i^{(k),*}}} \right)} \right){{\bm{a}}^{(k)}},\left( {{{\mathbf{I}}_{{n_u}}} \otimes \hat{\bm{ G}}_{{L_{u,k}},k}^{(\alpha )}\left( {\hat z_{{M_k},i,0}^{(k),\alpha _i^{(k),*}}} \right)} \right){{\bm{b}}^{(k)}},\hat z_{{M_k},i,0}^{(k),\alpha _i^{(k),*}}} \right), \ldots ,\right.\nonumber\\
	&\left.{{\tilde f}_1}\left( {\left( {{{\mathbf{I}}_{{n_x}}} \otimes \hat{\bm{ G}}_{{L_{x,k}},k}^{(\alpha )}\left( {\hat z_{{M_k},i,{M_k}}^{(k),\alpha _i^{(k),*}}} \right)} \right){{\bm{a}}^{(k)}},\left( {{{\mathbf{I}}_{{n_u}}} \otimes \hat{\bm{ G}}_{{L_{u,k}},k}^{(\alpha )}\left( {\hat z_{{M_k},i,{M_k}}^{(k),\alpha _i^{(k),*}}} \right)} \right){{\bm{b}}^{(k)}},\hat z_{{M_k},i,{M_k}}^{(k),\alpha _i^{(k),*}}} \right), \ldots,\right.\nonumber\\
	&\left.{{\tilde f}_{n_x}}\left( {\left( {{{\mathbf{I}}_{{n_x}}} \otimes \hat{\bm{ G}}_{{L_{x,k}},k}^{(\alpha )}\left( {\hat z_{{M_k},i,0}^{(k),\alpha _i^{(k),*}}} \right)} \right){{\bm{a}}^{(k)}},\left( {{{\mathbf{I}}_{{n_u}}} \otimes \hat{\bm{ G}}_{{L_{u,k}},k}^{(\alpha )}\left( {\hat z_{{M_k},i,0}^{(k),\alpha _i^{(k),*}}} \right)} \right){{\bm{b}}^{(k)}},\hat z_{{M_k},i,0}^{(k),\alpha _i^{(k),*}}} \right), \ldots ,\right.\nonumber\\
	&\left.{{\tilde f}_{n_x}}\left( {\left( {{{\mathbf{I}}_{{n_x}}} \otimes \hat{\bm{ G}}_{{L_{x,k}},k}^{(\alpha )}\left( {\hat z_{{M_k},i,{M_k}}^{(k),\alpha _i^{(k),*}}} \right)} \right){{\bm{a}}^{(k)}},\left( {{{\mathbf{I}}_{{n_u}}} \otimes \hat{\bm{ G}}_{{L_{u,k}},k}^{(\alpha )}\left( {\hat z_{{M_k},i,{M_k}}^{(k),\alpha _i^{(k),*}}} \right)} \right){{\bm{b}}^{(k)}},\hat z_{{M_k},i,{M_k}}^{(k),\alpha _i^{(k),*}}} \right)\right]^T,\nonumber\\
	&\quad i = 0,\ldots ,{N_k} + 1; k \in \mathbb{K}.
\end{align}
Furthermore, the discrete path and boundary constraints are given by
\begin{equation}\label{eq:discpath1}
	{{\bm{C}}_{\min }} \le \tilde {\bm{C}}\left( {\left( {{{\mathbf{I}}_{{n_x}}} \otimes \hat{\bm{G}}_{{L_{x,k}},k}^{(\alpha )}\left( {\hat \tau _{{N_k},i}^{(k),\alpha }} \right)} \right){{\bm{a}}^{(k)}},\left( {{{\mathbf{I}}_{{n_u}}} \otimes \hat {\bm{G}}_{{L_{u,k}},k}^{(\alpha )}\left( {\hat \tau _{{N_k},i}^{(k),\alpha }} \right)} \right){{\bm{b}}^{(k)}},\hat \tau _{{N_k},i}^{(k),\alpha }} \right) \le {{\bm{C}}_{\max }},\quad i = 0, \ldots ,{N_k} + 1; k \in \mathbb{K},
\end{equation}
\begin{equation}\label{eq:discbound1}
	\psi \left( {\left( {{{\mathbf{I}}_{{n_x}}} \otimes {{\left( {{\bm{1}}_{{L_{x,1}} + 1}^{( - )}} \right)}^T}} \right){{\bm{a}}^{(1)}},{t_0},\left( {{{\mathbf{I}}_{{n_x}}} \otimes {\bm{1}}_{{L_{x,K}} + 1}^T} \right){{\bm{a}}^{(K)}},{t_f}} \right) = {\bm{0}}.
\end{equation}
The discrete control continuity constraints \eqref{eq:controlcont2} are imposed as follows:
\begin{equation}\label{eq:disccont1}
	\left( {{{\mathbf{I}}_{{n_u}}} \otimes {{\left( {{\bm{1}}_{{L_{u,k}} + 1}^{( - )}} \right)}^T}} \right){{\bm{b}}^{(k)}} - \left( {{{\mathbf{I}}_{{n_u}}} \otimes {\bm{1}}_{{L_{u,k - 1}} + 1}^T} \right){{\bm{b}}^{(k - 1)}} = {\bm{0}},\quad k = 2,\ldots, K.
\end{equation}
Hence, the optimal control problem has been reduced to a nonlinear programming problem in which we seek the minimization of the objective function ${J^{(\alpha)}_{N,{L_x},{L_u}}}$ defined by Eq. \eqref{eq:disccost1} subject to the generally nonlinear constraints \eqref{eq:discdynsys1}, \eqref{eq:discpath1}, \eqref{eq:discbound1}, and the linear constraints \eqref{eq:disccont1}. 
\begin{rem}
The present GISE method adopts both collocation and interpolation techniques to obtain the sought approximations. In particular, the spectral coefficient vectors are determined through collocation at the augmented KESGG nodes on each element $\bm{\Omega}_k$, while the KESOBGIMs are constructed through interpolation at the KESGG nodes.
\end{rem}
\subsection{Adaptive Strategy}
\label{subsec:as1}
In this section, we present a multicriterion for locating the elements where the state and control functions are smooth/nonsmooth by monitoring: (i) the maximum residual of the discrete dynamical system equations; i.e., checking whether the state and control variables at the midpoints of each segment joining two consecutive discretization points on the same element meet the restrictions of the dynamical system equations, (ii) the magnitude of the last coefficients in the state and control truncated series.

To illustrate the proposed adaptive technique, let us begin by defining the element midpoints vector $\check{\bm{\tau }}_{{N_k}}^{(k),\alpha } = {\left[ {\check\tau _{{N_k},0}^{(k),\alpha }, \ldots ,\check\tau _{{N_k},{N_k}}^{(k),\alpha }} \right]^T}: \check\tau _{{N_k},i}^{(k),\alpha } = \frac{1}{2}\left( {\hat \tau _{{N_k},i}^{(k),\alpha } + \hat \tau _{{N_k},i + 1}^{(k),\alpha }} \right),\; i = 0, \ldots ,{N_k}; k \in \mathbb{K}.$ Let $\bar M_k \in \mathbb{Z}^+ $, for all $ k \in \mathbb{K}$, and denote the KESOBGIM constructed using the integration nodes vector $\check{\bm{\tau }}_{{N_k}}^{(k),\alpha }$ by ${}_k{\check{\mathbf{P}}}_{OB}^{(1)} $, for all $ k \in \mathbb{K}$; cf. \cite{Elgindy2016a}. On each element $\bm{\Omega}_k$, define the absolute residual matrix ${{\mathbf{R}}^{(k)}} = \left[ {{\bm{R}}_0^{(k)}; \ldots ;{\bm{R}}_{{N_k}}^{(k)}} \right]:$
\begin{equation}
	{\bm{R}}_i^{(k)} = \left| {{{\left( {\left( {{{\mathbf{I}}_{{n_x}}} \otimes \left( {\hat{\bm{ G}}_{{L_{x,k}},k}^{(\alpha )}\left( {\check \tau _{{N_k},i}^{(k),\alpha }} \right) - {{\left( {{\bm{1}}_{{L_{x,k}} + 1}^{( - )}} \right)}^T}} \right)} \right){{\bm{a}}^{(k)}} - \frac{{{t_f} - {t_0}}}{2}\left( {{{\mathbf{I}}_{{n_x}}}{ \otimes _k}\check{\mathbf{P}}_{OB,i}^{(1)}} \right) \check{\bm{F}}_i^{(k)}} \right)}^T}} \right|,\quad i = 0, \ldots ,{N_k},
\end{equation}
where ``$[\cdot; \cdot]$'' is the vertical matrix concatenation along columns, and
\begin{align}
	\check {\bm{F}}_i^{(k)} &= \left[ {{\tilde f}_1}\left( {\left( {{{\mathbf{I}}_{{n_x}}} \otimes \hat{\bm{ G}}_{{L_{x,k}},k}^{(\alpha )}\left( {\check z_{{\bar{M_k}},i,0}^{(k),\alpha _i^{(k),*}}} \right)} \right){{\bm{a}}^{(k)}},\left( {{{\mathbf{I}}_{{n_u}}} \otimes \hat{\bm{ G}}_{{L_{u,k}},k}^{(\alpha )}\left( {\check z_{{\bar{M_k}},i,0}^{(k),\alpha _i^{(k),*}}} \right)} \right){{\bm{b}}^{(k)}},\check z_{{\bar{M_k}},i,0}^{(k),\alpha _i^{(k),*}}} \right), \ldots ,\right.\nonumber\\
	&\left.{{\tilde f}_1}\left( {\left( {{{\mathbf{I}}_{{n_x}}} \otimes \hat{\bm{ G}}_{{L_{x,k}},k}^{(\alpha )}\left( {\check z_{{\bar{M_k}},i,{\bar{M_k}}}^{(k),\alpha _i^{(k),*}}} \right)} \right){{\bm{a}}^{(k)}},\left( {{{\mathbf{I}}_{{n_u}}} \otimes \hat{\bm{ G}}_{{L_{u,k}},k}^{(\alpha )}\left( {\check z_{{\bar{M_k}},i,{\bar{M_k}}}^{(k),\alpha _i^{(k),*}}} \right)} \right){{\bm{b}}^{(k)}},\check z_{{\bar{M_k}},i,{\bar{M_k}}}^{(k),\alpha _i^{(k),*}}} \right), \ldots,\right.\nonumber\\
	&\left.{{\tilde f}_{n_x}}\left( {\left( {{{\mathbf{I}}_{{n_x}}} \otimes \hat{\bm{ G}}_{{L_{x,k}},k}^{(\alpha )}\left( {\check z_{{\bar{M_k}},i,0}^{(k),\alpha _i^{(k),*}}} \right)} \right){{\bm{a}}^{(k)}},\left( {{{\mathbf{I}}_{{n_u}}} \otimes \hat{\bm{ G}}_{{L_{u,k}},k}^{(\alpha )}\left( {\check z_{{\bar{M_k}},i,0}^{(k),\alpha _i^{(k),*}}} \right)} \right){{\bm{b}}^{(k)}},\check z_{{\bar{M_k}},i,0}^{(k),\alpha _i^{(k),*}}} \right), \ldots ,\right.\nonumber\\
	&\left.{{\tilde f}_{n_x}}\left( {\left( {{{\mathbf{I}}_{{n_x}}} \otimes \hat{\bm{ G}}_{{L_{x,k}},k}^{(\alpha )}\left( {\check z_{{\bar{M_k}},i,{\bar{M_k}}}^{(k),\alpha _i^{(k),*}}} \right)} \right){{\bm{a}}^{(k)}},\left( {{{\mathbf{I}}_{{n_u}}} \otimes \hat{\bm{ G}}_{{L_{u,k}},k}^{(\alpha )}\left( {\check z_{{\bar{M_k}},i,{\bar{M_k}}}^{(k),\alpha _i^{(k),*}}} \right)} \right){{\bm{b}}^{(k)}},\check z_{{\bar{M_k}},i,{\bar{M_k}}}^{(k),\alpha _i^{(k),*}}} \right)\right]^T,\quad i = 0, \ldots ,{N_k}.
\end{align}
Now, let ${i_{\max }} = i,{j_{\max }} = j:\left({{\mathbf{R}}^{(k)}}\right)_{{i_{\max }},{j_{\max }}} = \mathop {\max }\limits_{i,j} \left({{\mathbf{R}}^{(k)}}\right)_{i,j}$. If $\left({{\mathbf{R}}^{(k)}}\right)_{{i_{\max }},{j_{\max }}}$ falls below $\epsilon_{\mathbf{R}}$, a user-specified threshold for the size of the residual error, then the obtained state and control spectral coefficients could be optimal. As a further measure of the quality of the determined coefficients, we check the magnitude of the last coefficients in the state and control truncated series. In fact, for sufficiently smooth functions, the $i$th coefficient of the spectral expansion generally decay faster than any inverse power of $i$, for all $ i \in \mathbb{Z}^+$; cf. \cite{Elgindy2013b}. This fundamental property of spectral methods can be exploited and combined with the residual error measurement to form a strong tool used to decide whether to accept the obtained approximations, or not. We shall refer to the condition $\left({{\mathbf{R}}^{(k)}}\right)_{{i_{\max }},{j_{\max }}} < \epsilon_{\mathbf{R}}$ by Condition $\cal{A}$ and refer to the inequalities $\left| {a_{r,{L_{x,k}}}^{(k)}} \right|,\left| {b_{s,{L_{u,k}}}^{(k)}} \right| < \epsilon_{\text{coeff}}$, for all $ r,s$ by Condition $\cal{B}$, where $\epsilon_{\text{coeff}}$ is a user-specified threshold for the size of the last spectral coefficients .

If both Conditions $\cal{A}$ and $\cal{B}$ are satisfied, the obtained approximations are considered acceptable. If not, then we need to decide whether to divide the current mesh interval $\bm{\Omega}_k$, or increase the number of collocation points and spectral coefficients. To this end, we slightly follow the approach presented by \cite{Darby2011}. In particular, we determine the column vector, ${{\bm{r}}^{(k)}} = \left[ {{\left( {{{\mathbf{R}}^{(k)}}} \right)}_{0,{j_{\max }}}}, \ldots ,{{\left( {{{\mathbf{R}}^{(k)}}} \right)}_{{N_k},{j_{\max }}}} \right]^T$ from the largest element of ${\mathbf{R}}^{(k)}$, and calculate the arithmetic mean, ${{\bar r}^{(k)}}$, of the elements of ${{\bm{r}}^{(k)}}$ by the following formula:
\[{{\bar r}^{(k)}} = \frac{{\sum\nolimits_{i = 0}^{{N_k}} {{{\left( {{{\mathbf{R}}^{(k)}}} \right)}_{i,{j_{\max }}}}} }}{{{N_k} + 1}}.\]
Finally, we find the residual vector ${{\bm{\beta }}^{(k)}}$ via calculating ${{\bm{r}}^{(k)}}/{{\bar r}^{(k)}}$. Now, let $\rho > 1$ be a user-specified threshold for the size of the elements $\beta_i^{(k)}, i = 0, \ldots, N_k$ of the vector ${{\bm{\beta }}^{(k)}}$, and define a discrete local maximum (peak) $\beta_{i,dlm}^{(k)}$ of ${{\bm{\beta }}^{(k)}}$ by the data sample that is larger than its two neighboring samples; i.e., the value $\beta_i: \beta_{i-1} < \beta_{i} > \beta_{i+1}, i = 1, \ldots, N_k-1$. Let $\bm{\beta}_{dlm}^{(k)}$ be the row vector of the local maxima of ${{\bm{\beta }}^{(k)}}$. We have the following three cases:
\begin{description}
	\item[$\mathbf{(i)}$] If $\beta_0 > \beta_1 \wedge\; \beta_{N_k} > \beta_{N_k-1}$, then set $\bm{\beta}_{dlm}^{(k)} := \left[\beta_0, {\bm{\beta}_{dlm}^{(k)}}, \beta_{N_k}\right]$.
	\item[$\mathbf{(ii)}$] If $\beta_0 > \beta_1 \wedge\; \beta_{N_k} \le \beta_{N_k-1}$, then set $\bm{\beta}_{dlm}^{(k)} := \left[\beta_0, {\bm{\beta}_{dlm}^{(k)}}\right]$.
	\item[$\mathbf{(iii)}$] If $\beta_0 \le \beta_1 \wedge\; \beta_{N_k} > \beta_{N_k-1}$, then set $\bm{\beta}_{dlm}^{(k)} := \left[{\bm{\beta}_{dlm}^{(k)}}, \beta_{N_k}\right]$.
\end{description}
If the error is nonuniform, we break the domain $\bm{\Omega}_k$ at the element midpoints $\check \tau _{{N_k},j}^{(k),\alpha }: \beta_{j,dlm}^{(k)} > \rho $, for all $ j$. Otherwise, the error is considered uniform, so we increase the number of collocation points and spectral coefficients by some constant values as long as the degree of the Gegenbauer polynomial interpolant remains below a maximum allowable degree. In particular, we choose some positive integer numbers $\bar N_k, \bar L_{x,k}, \bar L_{u,k}, N_{k,\max}, L_{x,k,\max}, L_{u,k,\max}$, and update the values of $N_k, L_{x,k}$, and $L_{u,k}: N_k:= N_k + \bar N_k \le N_{k,\max}, L_{x,k}:= L_{x,k} + \bar L_{x,k} \le L_{x,k,\max}, L_{u,k}:= L_{u,k} + \bar L_{u,k} \le L_{u,k,\max}$. In the former case, the interval partitioning is only allowed for a maximum number of divisions $k_{\max} \in \mathbb{Z}^+$. Moreover, to prevent the division of a relatively small domain, we introduce the ``edge spacing'' parameter $\epsilon_{ES}$, so that further domain partitioning is forbidden if $\left|\bm{\Omega}_k \right| < \epsilon_{ES}$, for all $ k \in \mathbb{K}$, where $\left|\bm{\Omega}_k \right|$ denotes the length of the interval $\bm{\Omega}_k$, for all $ k \in \mathbb{K}$. In this case, only increasing the number of collocation points and spectral coefficients is allowed. On the other hand, if $\left|\bm{\Omega}_k \right| \ge \epsilon_{ES}$, for some $k \in \mathbb{K}$, and the computational algorithm locates some edge points that are sufficiently close from the endpoints of $\bm{\Omega}_k$ in the sense that the distance between each point of them and an endpoint of $\bm{\Omega}_k$ is less than the prescribed $\epsilon_{ES}$, then these located edge points are to be discarded, and we break the interval at the remaining located edge points. If no other edge points exist, then we divide the domain using a similar partitioning technique to that adopted by the popular golden section search method. In particular, we break the interval at ${\tau _{k - 1}} + ({\tau _k} - {\tau _{k - 1}})/\varrho$, where $\varrho \approx 1.6180339887 \ldots$ represents the golden ratio.
\begin{rem}
If the state and control profiles are assumed to be sufficiently smooth, or the locations of discontinuities and/or ``points of nonsmoothness\footnote{The points at which a certain order derivative of a real-valued function is discontinuous.}'' are known a priori, one can easily apply the GISE method directly on the elements in which the solutions are sufficiently smooth without implementing the proposed adaptive strategy.
\end{rem}
\section{Error analysis of the KESOBGQ}
\label{subsec:err}
This section is devoted for analyzing the truncation error of the KESOBGQ constructed through interpolation at the adjoint KESGG nodes, since it constitutes a crucial numerical tool in the discretization procedure. 

Let ${\left\| g \right\|_{\infty ,{{\bm{\Omega }}_k}}} = \sup \left\{ {\left| {g(x)} \right|:x \in {{\bm{\Omega }}_k}} \right\}$, for any real-valued function $g$ defined on ${{\bm{\Omega }}_k} $, for all $ k \in \mathbb{K}$. Let also $\mathbb{P}{_{n}}$ denote the space of all polynomials of degree at most $n$, for some $n \in \mathbb{Z}^+$. The following theorem highlights the truncation error of the KESOBGQ, after successfully locating the discontinuities or the points of nonsmoothness.

\begin{thm}\label{sec:erranalysgp1}
Let $n_k, m_k \in \mathbb{Z}_0^+$, and consider any arbitrary integration nodes ${y_i^{(k)}} \in \bm{\Omega}_k, i = 0, \ldots, n_k$, for all $ k \in \mathbb{K}$. Furthermore, let $g\left(\tau^{(k)}\right) \in C^{m_k + 1}(\bm{\Omega}_k)$, be a real-valued function approximated by the truncated $k$th element shifted Gegenbauer polynomials expansion series 
such that the $k$th element shifted Gegenbauer spectral coefficients are computed at each node ${y_i^{(k)}}$ by interpolating the function $g$ at the adjoint KESGG nodes $\hat z_{{m_k},i,j}^{(k),\alpha _i^{(k),*}} \in \mathbb{T}_{n_k-1,m_k}$. Then there exist some real numbers $\xi_i^{(k)} \in (\tau_{k-1}, \tau_k), i =0, \ldots, n_k$, such that
\begin{equation}
\int_{{\tau _{k - 1}}}^{{y_i^{(k)}}} {g\left( {{\tau ^{(k)}}} \right)\,d{\tau ^{(k)}}}  = \sum\limits_{j = 0}^{{m_k}} {_k{p}_{OB,i,j}^{(1)}\,g\left( {\hat z_{{m_k},i,j}^{(k),\alpha _i^{(k),*}}} \right)}  + E_{{m_k}}^{\left( {\alpha _i^{(k),*}} \right)}\left( {y_i^{(k)},\xi _i^{(k)}} \right) ,
\end{equation}
where 
\begin{equation}
	E_{{m_k}}^{\left( {\alpha _i^{(k),*}} \right)}\left( {y_i^{(k)},\xi _i^{(k)}} \right) = \frac{{{g^{({m_k} + 1)}}\left( {\xi _i^{(k)}} \right)}}{{({m_k} + 1)!{\mkern 1mu} K_{k,{m_k} + 1}^{\left( {\alpha _i^{(k),*}} \right)}}}\int_{{\tau _{k - 1}}}^{y_i^{(k)}} {\hat G_{{m_k} + 1,k}^{\left( {\alpha _i^{(k),*}} \right)}\left( {{\tau ^{(k)}}} \right){\mkern 1mu} d{\tau ^{(k)}}} ,
\end{equation}
is the truncation error of the KESOBGQ, and
\begin{equation}
K_{k,j}^{(\alpha )} = \frac{{{2^{2j - 1}}}}{{{{\left( {{\tau _k} - {\tau _{k - 1}}} \right)}^j}}}\frac{{\Gamma \left( {2\alpha  + 1} \right)\Gamma \left( {j + \alpha } \right)}}{{\Gamma \left( {\alpha  + 1} \right)\Gamma \left( {j + 2\alpha } \right)}}\quad \forall j \in \mathbb{Z}_0^+,
\end{equation}
is the leading coefficient of the $j$th-degree $k$th element shifted Gegenbauer polynomial.
\end{thm}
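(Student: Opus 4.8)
The plan is to identify the quadrature sum $\sum_{j=0}^{m_k}{}_kp_{OB,i,j}^{(1)}\,g(\hat z_{m_k,i,j}^{(k),\alpha_i^{(k),*}})$ with the exact integral $\int_{\tau_{k-1}}^{y_i^{(k)}}\mathcal{P}_{m_k}[g](\tau^{(k)})\,d\tau^{(k)}$, where $\mathcal{P}_{m_k}[g]\in\mathbb{P}_{m_k}$ is the Lagrange interpolant of $g$ through the $m_k+1$ adjoint KESGG nodes $\hat z_{m_k,i,j}^{(k),\alpha_i^{(k),*}}\in\mathbb{T}_{n_k-1,m_k}$, $j=0,\dots,m_k$. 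This is precisely the defining property of the $i$th row of the KESOBGIM constructed in \cite{Elgindy2016a}: its weights reproduce the unit-weight integral over $[\tau_{k-1},y_i^{(k)}]$ of the barycentric Lagrange interpolant supported on those nodes, and the rule is exact on $\mathbb{P}_{m_k}$. Subtracting this identity from $\int_{\tau_{k-1}}^{y_i^{(k)}}g\,d\tau^{(k)}$ collapses the truncation error to the integral of the pointwise interpolation error,
\[
\int_{\tau_{k-1}}^{y_i^{(k)}}\Big(g(\tau^{(k)})-\mathcal{P}_{m_k}[g](\tau^{(k)})\Big)\,d\tau^{(k)}.
\]

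Next I would substitute the classical interpolation error formula, which is available because $g\in C^{m_k+1}(\bm{\Omega}_k)$: at every $\tau^{(k)}\in\bm{\Omega}_k$ the error equals $g^{(m_k+1)}(\cdot)/(m_k+1)!$ times the nodal polynomial $\prod_{j=0}^{m_k}\big(\tau^{(k)}-\hat z_{m_k,i,j}^{(k),\alpha_i^{(k),*}}\big)$. Since the adjoint nodes are exactly the zeros of $\hat G_{m_k+1,k}^{(\alpha_i^{(k),*})}$, this monic nodal polynomial equals $\hat G_{m_k+1,k}^{(\alpha_i^{(k),*})}/K_{k,m_k+1}^{(\alpha_i^{(k),*})}$, which already produces the Gegenbauer factor and the leading-coefficient denominator appearing in the claimed $E_{m_k}^{(\alpha_i^{(k),*})}$. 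The closed form for $K_{k,j}^{(\alpha)}$ is then a routine computation: applying the affine map $\tau^{(k)}\mapsto(2\tau^{(k)}-\tau_{k-1}-\tau_k)/(\tau_k-\tau_{k-1})$ to $G_j^{(\alpha)}$ multiplies its known leading coefficient by $\big(2/(\tau_k-\tau_{k-1})\big)^j$, yielding the stated expression.

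The decisive step is to pull the $(m_k+1)$st derivative outside the integral as a single value $g^{(m_k+1)}(\xi_i^{(k)})$ via a mean value theorem for integrals, and this is where I expect the real difficulty. A naive appeal to the first mean value theorem requires the weight $\hat G_{m_k+1,k}^{(\alpha_i^{(k),*})}$ to keep a constant sign on $[\tau_{k-1},y_i^{(k)}]$, whereas this polynomial has all $m_k+1$ of its zeros interior to $\bm{\Omega}_k$ and therefore changes sign once $y_i^{(k)}$ passes its first interior root. To make the extraction rigorous I would first replace the pointwise intermediate argument by the divided difference $\Phi(\tau^{(k)})=g[\hat z_{m_k,i,0}^{(k),\alpha_i^{(k),*}},\dots,\hat z_{m_k,i,m_k}^{(k),\alpha_i^{(k),*}},\tau^{(k)}]$, which is continuous on $\bm{\Omega}_k$ for $g\in C^{m_k+1}$ and coincides with $g^{(m_k+1)}(\cdot)/(m_k+1)!$ at an interior point; the claimed identity then amounts to showing that the $\hat G_{m_k+1,k}^{(\alpha_i^{(k),*})}$-weighted average of $\Phi$ over $[\tau_{k-1},y_i^{(k)}]$ is a value genuinely attained by $\Phi$. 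The cleanest route is to argue that the optimal parameter $\alpha_i^{(k),*}$ is coupled to the upper limit so that $\hat G_{m_k+1,k}^{(\alpha_i^{(k),*})}$ does not change sign over the effective integration range (equivalently, that $\int_{\tau_{k-1}}^{y_i^{(k)}}\hat G_{m_k+1,k}^{(\alpha_i^{(k),*})}\,d\tau^{(k)}\neq0$ together with a compensating sign estimate), after which the intermediate value theorem applied to the continuous $\Phi$ delivers $\xi_i^{(k)}\in(\tau_{k-1},\tau_k)$ and the stated formula. Pinning down exactly which sign-constancy the optimal adjoint-node construction guarantees is the step I would scrutinize most carefully, since a symmetric nodal polynomial already shows that the clean mean-value form can fail without such a condition.
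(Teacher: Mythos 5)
Your argument follows essentially the same route as the paper's proof: reduce the quadrature error to the integral over $[\tau_{k-1},y_i^{(k)}]$ of the Lagrange interpolation remainder at the adjoint KESGG nodes, identify the monic nodal polynomial with $\hat G_{m_k+1,k}^{(\alpha_i^{(k),*})}\big/K_{k,m_k+1}^{(\alpha_i^{(k),*})}$ because those nodes are precisely its zeros, and recover the closed form of $K_{k,j}^{(\alpha)}$ from the affine rescaling. The only presentational difference is that the paper re-derives the pointwise remainder formula from scratch (the auxiliary function $Y_i^{(k)}$ together with the generalized Rolle theorem) rather than citing the classical result. More importantly, the step you single out as the decisive difficulty is exactly the step the paper performs without comment: in the pointwise identity the intermediate point depends on the evaluation point $\tau^{(k)}$, and the proof concludes by ``integrating both sides'' while writing the error with a single $\xi_i^{(k)}$. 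As you observe, the weighted mean value theorem that would license this requires $\hat G_{m_k+1,k}^{(\alpha_i^{(k),*})}$ to keep one sign on $[\tau_{k-1},y_i^{(k)}]$, which fails once $y_i^{(k)}$ passes the first interior zero; neither the paper nor your divided-difference reformulation supplies the missing sign argument, and your suspicion that the optimal-parameter construction does not automatically provide it is well founded. So your proposal reproduces the paper's proof and correctly isolates its one genuinely incomplete step. Note, however, that the gap is harmless for the subsequent Theorem \ref{thm:2}, whose bound only invokes ${\left\| {{g^{({m_k} + 1)}}} \right\|_{\infty ,{{\bm{\Omega }}_k}}}$ and therefore follows equally well from the error left in the form $\frac{1}{(m_k+1)!\,K_{k,m_k+1}^{(\alpha_i^{(k),*})}}\int_{\tau_{k-1}}^{y_i^{(k)}}g^{(m_k+1)}\left(\xi\left(\tau^{(k)}\right)\right)\hat G_{m_k+1,k}^{(\alpha_i^{(k),*})}\left(\tau^{(k)}\right)d\tau^{(k)}$.
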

\begin{proof}
For each node ${y_i^{(k)}}$, set the error term of the $k$th element shifted Gegenbauer interpolation as
\begin{equation}
	{R_{m_k,i}}\left(\tau^{(k)}\right) = g\left(\tau^{(k)}\right) - {P_{m_k,i}}g\left(\tau^{(k)}\right)\quad \forall i,
\end{equation}
where 
\begin{equation}
	{P_{m_k,i}}g\left(\tau^{(k)}\right) = \sum\limits_{j = 0}^{{m_k}} {a_{i,j}^{(k)} \hat G_{j,k}^{\left(\alpha _i^{(k),*}\right)}\left( {\tau^{(k)}} \right)}\quad \forall i,
\end{equation}
is the $m_k$th-degree $k$th element shifted Gegenbauer interpolant of the function $g$ on $\bm{\Omega}_k$, and ${a_{i,j}^{(k)}}, j = 0, \ldots, m_k$ are the $k$th element shifted Gegenbauer spectral coefficients. Now construct the auxiliary function
\begin{equation}
	{Y_i^{(k)}}(t) = {R_{m_k,i}}(t) - \frac{{{R_{m_k,i}}\left(\tau^{(k)}\right)}}{\hat G_{m_k+1,k}^{\left(\alpha _i^{(k),*}\right)}\left( {{\tau ^{(k)}}} \right)} \hat G_{m_k+1,k}^{\left(\alpha _i^{(k),*}\right)}\left( {{t}} \right)\quad \forall i.
\end{equation}
Since $g\left(\tau^{(k)}\right) \in C^{m_k + 1}(\bm{\Omega}_k)$, and ${P_{m_k,i}}g \in {C^\infty }(\bm{\Omega}_k),$ it follows that ${Y_i^{(k)}} \in {C^{m_k + 1}}(\bm{\Omega}_k)$. For $t = \hat z_{{m_k},i,j}^{(k),\alpha _i^{(k),*}}$, we have
\begin{equation}
	{Y_i^{(k)}}\left( \hat z_{{m_k},i,j}^{(k),\alpha _i^{(k),*}} \right) = {R_{{m_k,i}}}\left( \hat z_{{m_k},i,j}^{(k),\alpha _i^{(k),*}} \right) - \frac{{{R_{{m_k,i}}}\left( {{\tau ^{(k)}}} \right)}}{{\hat G_{{m_k} + 1,k}^{\left(\alpha _i^{(k),*}\right)}\left( {{\tau ^{(k)}}} \right)}}\hat G_{{m_k} + 1,k}^{\left(\alpha _i^{(k),*}\right)}\left( \hat z_{{m_k},i,j}^{(k),\alpha _i^{(k),*}} \right) = 0,
\end{equation}
since $\hat z_{{m_k},i,j}^{(k),\alpha _i^{(k),*}}, i = 0, \ldots, m_k$, are zeroes of ${{R_{m_k,i}}\left({{\tau ^{(k)}}}\right)}$. Moreover,
\begin{equation}
	{Y_i^{(k)}}\left( {{\tau ^{(k)}}} \right) = {R_{{m_k,i}}}\left( {{\tau ^{(k)}}} \right) - \frac{{{R_{{m_k,i}}}\left( {{\tau ^{(k)}}} \right)}}{{\hat G_{{m_k} + 1,k}^{\left(\alpha _i^{(k),*}\right)}\left( {{\tau ^{(k)}}} \right)}}\hat G_{{m_k} + 1,k}^{\left(\alpha _i^{(k),*}\right)}\left( {{\tau ^{(k)}}} \right) = 0.
\end{equation}
Thus ${Y_i^{(k)}} \in {C^{m_k + 1}}(\bm{\Omega}_k),$ and ${Y_i^{(k)}}$ is zero at the $(m_k + 2)$ distinct nodes $\tau^{(k)}, \hat z_{{m_k},i,j}^{(k),\alpha _i^{(k),*}}, j = 0, \ldots, m_k$. By the generalized Rolle's Theorem, there exists a number ${\xi_i}^{(k)}$ in $(\tau_{k-1}, \tau_k)$ such that ${\left({Y_i^{(k)}}\right)^{(m_k + 1)}}\left({\xi_i}^{(k)} \right) = 0$. Therefore,
\begin{align}
0 &= {\left({Y_i^{(k)}}\right)^{({m_k} + 1)}}\left({\xi_i}^{(k)} \right) = R_{{m_k,i}}^{({m_k} + 1)}\left({\xi_i}^{(k)} \right) - \frac{{{R_{{m_k,i}}}\left( {{\tau ^{(k)}}} \right)}}{{\hat G_{{m_k} + 1,k}^{\left(\alpha _i^{(k),*}\right)}\left( {{\tau ^{(k)}}} \right)}}\frac{{{d^{{m_k} + 1}}}}{{d{t^{{m_k} + 1}}}}{\left. {\hat G_{{m_k} + 1,k}^{\left(\alpha _i^{(k),*}\right)}(t)} \right|_{t = {\xi_i}^{(k)} }}\nonumber\\
&= R_{{m_k,i}}^{({m_k} + 1)}\left({\xi_i}^{(k)} \right) - \frac{{{R_{{m_k,i}}}\left( {{\tau ^{(k)}}} \right)}}{{\hat G_{{m_k} + 1,k}^{\left(\alpha _i^{(k),*}\right)}\left( {{\tau ^{(k)}}} \right)}}{\left( {\frac{2}{{{\tau _k} - {\tau _{k - 1}}}}} \right)^{{m_k} + 1}}\frac{{{d^{{m_k} + 1}}}}{{d{t^{{m_k} + 1}}}}{\left. {G_{{m_k} + 1}^{\left(\alpha _i^{(k),*}\right)}(t)} \right|_{t = {\xi_i}^{(k)} }}\nonumber\\
&= R_{{m_k,i}}^{({m_k} + 1)}\left({\xi_i}^{(k)} \right) - {\left( {\frac{2}{{{\tau _k} - {\tau _{k - 1}}}}} \right)^{{m_k} + 1}}{\mkern 1mu} ({m_k} + 1)!{\mkern 1mu} K_{{m_k} + 1}^{\left(\alpha _i^{(k),*}\right)}\frac{{{R_{{m_k,i}}}\left( {{\tau ^{(k)}}} \right)}}{{\hat G_{{m_k} + 1,k}^{\left(\alpha _i^{(k),*}\right)}\left( {{\tau ^{(k)}}} \right)}},
\end{align}
where $K_{{j}}^{\left(\alpha _i^{(k),*}\right)}$ is the leading coefficient of the Gegenbauer polynomial, $G_j^{\left(\alpha _i^{(k),*}\right)}\left( {{\tau ^{(k)}}} \right)$, for all $ j \in \mathbb{Z}_0^+$. Since ${P_{m_k,i}}g \in {\mathbb{P}_{m_k}},\,{({P_{m_k,i}}g)^{(m_k + 1)}}\left(\tau^{(k)}\right)$ is identically zero, then we have
\begin{align}
	&0 = {g^{({m_k} + 1)}}\left({\xi_i}^{(k)} \right) - {\left( {\frac{2}{{{\tau _k} - {\tau _{k - 1}}}}} \right)^{{m_k} + 1}}{\mkern 1mu} ({m_k} + 1)!{\mkern 1mu} K_{{m_k} + 1}^{\left(\alpha _i^{(k),*}\right)}\frac{{{R_{{m_k,i}}}\left( {{\tau ^{(k)}}} \right)}}{{\hat G_{{m_k} + 1,k}^{\left(\alpha _i^{(k),*}\right)}\left( {{\tau ^{(k)}}} \right)}}\\
&\Rightarrow g\left( {{\tau ^{(k)}}} \right) = {P_{{m_k},i}}g\left( {{\tau ^{(k)}}} \right) + {\left( {\frac{{{\tau _k} - {\tau _{k - 1}}}}{2}} \right)^{{m_k} + 1}}{\mkern 1mu} \frac{{{g^{({m_k} + 1)}}\left( {{\xi _i}^{(k)}} \right)}}{{({m_k} + 1)!{\mkern 1mu} K_{{m_k} + 1}^{\left( {\alpha _i^{(k),*}} \right)}}}\hat G_{{m_k} + 1,k}^{\left( {\alpha _i^{(k),*}} \right)}\left( {{\tau ^{(k)}}} \right).\label{eq:signifres1}
\end{align}
The proof of the theorem is established by rewriting the $m_k$th-degree $k$th element shifted Gegenbauer interpolant, ${P_{m_k,i}}g\left(\tau^{(k)}\right)$, in its equivalent Lagrange form (nodal approximation), and integrating both sides of Eq. \eqref{eq:signifres1} on the interval $[\tau_{k-1}, y_i^{(k)}]$.
\end{proof}

The following theorem marks the error bounds of the quadrature truncation error given by the above theorem on each element $\bm{\Omega}_k $, for all $ k \in \mathbb{K}$.
\begin{thm}\label{thm:2}
Given the assumptions of Theorem \ref{sec:erranalysgp1} such that ${\left\| {{g^{({m_k} + 1)}}} \right\|_{\infty ,{{\bm{\Omega }}_k}}} = {A_k} \in \mathbb{R}_0^ +$, for all $ k \in \mathbb{K}$, where the constant $A_k$ is dependent on $k$ but independent of $m_k$. Then there exist some constants $B_{1,k}^{\left( {\alpha _i^{(k),*}} \right)}$ and $B_2^{\left( {\alpha _i^{(k),*}} \right)}$, dependent on ${\alpha _i^{(k),*}}$ and independent of $m_k$ such that the quadrature truncation error, $E_{{m_k}}^{\left( {\alpha _i^{(k),*}} \right)}\left( {y_i^{(k)},\xi _i^{(k)}} \right)$, on each element $\bm{\Omega}_k$ is bounded by
\begin{equation}
	\begin{array}{l}
	{\left\| {E_{{m_k}}^{\left( {\alpha _i^{(k),*}} \right)}\left( {y_i^{(k)},\xi _i^{(k)}} \right)} \right\|_{\infty ,{\bm{\Omega}_k}}} =  B_{1,k}^{\left( {\alpha _i^{(k),*}} \right)}\,{2^{ - 2{m_k} - 1}}{{{e}}^{{m_k}}}{m_k}^{\alpha _i^{(k),*} - {m_k} - \frac{3}{2}}\left( {y_i^{(k)} - {\tau _{k - 1}}} \right){\left( {{\tau _k} - {\tau _{k - 1}}} \right)^{{m_k} + 1}} \times \\
	\left( {\left\{ \begin{array}{l}
	1,\quad {m_k} \ge 0 \wedge \alpha _i^{(k),*} \ge 0,\\
	\displaystyle{\frac{{\Gamma \left( {\frac{{{m_k}}}{2} + 1} \right)\Gamma \left( {\alpha _i^{(k),*} + \frac{1}{2}} \right)}}{{\sqrt \pi  \Gamma \left( {\frac{{{m_k}}}{2} + \alpha _i^{(k),*} + 1} \right)}}},\quad \frac{{{m_k} + 1}}{2} \in {\mathbb{Z}^ + } \wedge  - \frac{1}{2} < \alpha _i^{(k),*} < 0,\\
	\displaystyle{\frac{{2\Gamma \left( {\frac{{{m_k} + 3}}{2}} \right)\Gamma \left( {\alpha _i^{(k),*} + \frac{1}{2}} \right)}}{{\sqrt \pi  \sqrt {\left( {{m_k} + 1} \right)\left( {{m_k} + 2\alpha _i^{(k),*} + 1} \right)} \Gamma \left( {\frac{{{m_k} + 1}}{2} + \alpha _i^{(k),*}} \right)}}},\quad \frac{{{m_k}}}{2} \in \mathbb{Z}_0^ +  \wedge  - \frac{1}{2} < \alpha _i^{(k),*} < 0,\\
	B_2^{\left( {\alpha _i^{(k),*}} \right)} {\left( {{m_k} + 1} \right)^{ - \alpha _i^{(k),*}}},\quad {m_k} \to \infty  \wedge  - \frac{1}{2} < \alpha _i^{(k),*} < 0
	\end{array} \right.} \right),
	\end{array}
\end{equation}
where $B_{1,k}^{\left( {\alpha _i^{(k),*}} \right)} = {A_k}{D^{\left( {\alpha _i^{(k),*}} \right)}}$; the constants ${D^{\left( {\alpha _i^{(k),*}} \right)}} > 0$ and $B_2^{\left( {\alpha _i^{(k),*}} \right)} > 1$ are dependent on ${\alpha _i^{(k),*}}$, but independent of $m_k$.
\end{thm}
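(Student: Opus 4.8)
The plan is to start from the exact quadrature error representation established in Theorem \ref{sec:erranalysgp1} and turn it into a sup-norm estimate over $\bm{\Omega}_k$. Writing $n=m_k+1$ for brevity, I would first take absolute values in the formula for $E_{m_k}^{(\alpha_i^{(k),*})}(y_i^{(k)},\xi_i^{(k)})$ and invoke the hypothesis $\|g^{(m_k+1)}\|_{\infty,\bm{\Omega}_k}=A_k$ to replace $|g^{(m_k+1)}(\xi_i^{(k)})|$ by $A_k$. This reduces the task to controlling two independent quantities: the reciprocal leading-coefficient factor $1/\big[(m_k+1)!\,K_{k,m_k+1}^{(\alpha_i^{(k),*})}\big]$, which is available in closed form, and the shifted Gegenbauer integral $\big|\int_{\tau_{k-1}}^{y_i^{(k)}}\hat G_{m_k+1,k}^{(\alpha_i^{(k),*})}(\tau^{(k)})\,d\tau^{(k)}\big|$.

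For the integral I would use the crude but effective bound by length times sup-norm,
\[\Big|\int_{\tau_{k-1}}^{y_i^{(k)}}\hat G_{m_k+1,k}^{(\alpha_i^{(k),*})}(\tau^{(k)})\,d\tau^{(k)}\Big|\le (y_i^{(k)}-\tau_{k-1})\,\big\|\hat G_{m_k+1,k}^{(\alpha_i^{(k),*})}\big\|_{\infty,\bm{\Omega}_k},\]
which already produces the factor $(y_i^{(k)}-\tau_{k-1})$ of the claimed estimate. Since the affine map carrying $\bm{\Omega}_k$ onto $[-1,1]$ is a bijection, the shift leaves the sup-norm unchanged, so $\|\hat G_{m_k+1,k}^{(\alpha_i^{(k),*})}\|_{\infty,\bm{\Omega}_k}=\|G_{m_k+1}^{(\alpha_i^{(k),*})}\|_{\infty,[-1,1]}$. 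I would then insert the known maximum-value estimates for the normalized Gegenbauer polynomials: for $\alpha_i^{(k),*}\ge 0$ the extremum sits at the endpoints and equals $G_{m_k+1}^{(\alpha_i^{(k),*})}(1)=1$, while for $-1/2<\alpha_i^{(k),*}<0$ the extremum is interior and its value is an explicit ratio of Gamma functions whose shape depends on the parity of $n=m_k+1$, supplemented by the large-$m_k$ asymptotic $B_2^{(\alpha_i^{(k),*})}(m_k+1)^{-\alpha_i^{(k),*}}$. These are exactly the four branches displayed in the statement.

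It remains to combine these ingredients and simplify. Substituting the closed form of $K_{k,m_k+1}^{(\alpha_i^{(k),*})}$ gives
\[\frac{1}{(m_k+1)!\,K_{k,m_k+1}^{(\alpha_i^{(k),*})}}=\frac{(\tau_k-\tau_{k-1})^{m_k+1}}{2^{2m_k+1}(m_k+1)!}\,\frac{\Gamma(\alpha_i^{(k),*}+1)\,\Gamma(m_k+1+2\alpha_i^{(k),*})}{\Gamma(2\alpha_i^{(k),*}+1)\,\Gamma(m_k+1+\alpha_i^{(k),*})},\]
which yields the factors $2^{-2m_k-1}$ and $(\tau_k-\tau_{k-1})^{m_k+1}$ verbatim. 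I would then apply Stirling's formula to $(m_k+1)!=\Gamma(m_k+2)$ together with the standard ratio asymptotic $\Gamma(m_k+1+2\alpha_i^{(k),*})/\Gamma(m_k+1+\alpha_i^{(k),*})\sim m_k^{\alpha_i^{(k),*}}$ to extract the dominant factor $e^{m_k}\,m_k^{\alpha_i^{(k),*}-m_k-3/2}$; all bounded $\alpha$-dependent prefactors, including $\Gamma(\alpha_i^{(k),*}+1)/\Gamma(2\alpha_i^{(k),*}+1)$ and the Stirling constant, are absorbed into $D^{(\alpha_i^{(k),*})}$, so that $B_{1,k}^{(\alpha_i^{(k),*})}=A_k D^{(\alpha_i^{(k),*})}$ as asserted.

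The main obstacle I anticipate is twofold. First, the interior-maximum estimates for $-1/2<\alpha_i^{(k),*}<0$ are delicate, since the location of the global extremum of $|G_n^{(\alpha)}|$ on $[-1,1]$ migrates with $n$ and its parity; obtaining the three sharp closed forms (the two exact parity-dependent ratios and the asymptotic branch) requires the known extremal theory of Gegenbauer polynomials rather than a direct computation. Second, matching the Stirling asymptotics precisely to the stated prefactor $2^{-2m_k-1}e^{m_k}m_k^{\alpha_i^{(k),*}-m_k-3/2}$ demands careful bookkeeping of the $m_k$-independent constants, so as to guarantee that they are genuinely captured by $D^{(\alpha_i^{(k),*})}$ and $B_2^{(\alpha_i^{(k),*})}>1$ and do not conceal residual $m_k$-dependence; the remainder of the argument is then routine estimation.
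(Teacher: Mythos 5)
Your proposal is correct and follows essentially the same route as the paper, whose proof simply defers to Lemmas 4.1 and 4.2 of \cite{Elgindy2016}; those two lemmas encapsulate precisely the two ingredients you develop by hand, namely the parity- and sign-dependent sup-norm bounds for the (endpoint-normalized) Gegenbauer polynomials and the Stirling-type estimate of the factor $1/\bigl[(m_k+1)!\,K_{k,m_k+1}^{(\alpha_i^{(k),*})}\bigr]$. Your bookkeeping of the leading coefficient, the factor $\bigl(y_i^{(k)}-\tau_{k-1}\bigr)$ from the length-times-sup-norm bound, and the absorption of the $m_k$-independent constants into $D^{\left(\alpha_i^{(k),*}\right)}$ matches the stated bound exactly.
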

\begin{proof}
The proof can be established easily using \cite[Lemmas 4.1 and 4.2]{Elgindy2016}.
\end{proof}

\begin{rem}
It is noteworthy to mention that the accuracy achieved by a spectral differentiation/integration matrix used by traditional pseudospectral methods in the literature is usually constrained by the number of collocation points. Therefore, increasing the number of collocation points on each domain $\bm{\Omega}_k$, requires a similar grow in the size of the spectral differentiation/integration matrix, which could result in a significant grow in the total computational cost of the method. On the other hand, a notable merit of the present method as shown by Theorem \ref{thm:2} occurs in taking advantage of the free rectangular form of the KESOBGIM. In particular, regardless of the small/large number of collocation points used to determine the approximate states and controls, the present method endowed with the KESOBGIM can achieve almost full machine precision approximations to the integrals involved in the optimal control problem using relatively moderate values of the parameters $M_k$ and $\bar M_k$ on each element $\bm{\Omega_k}$; thus achieving excellent approximations while maintaining a low operational cost. We shall demonstrate this virtue further in the next section.
\end{rem}


%
\section{Numerical examples}
\label{sec:ne}
In this section, we report the results of the present GISE method on three nonlinear optimal control problems well studied in the literature. The nonlinear programming problems were solved using SNOPT software \cite{snopt75,GilMS05} with the major and minor feasibility tolerances, and major optimality tolerance all set at $10^{-10}$. The numerical experiments were conducted on a personal laptop equipped with an Intel(R) Core(TM) i7-2670QM CPU with 2.20GHz speed running on a Windows 10 64-bit operating system and provided with MATLAB R2014b (8.4.0.150421) software. 

\paragraph{\textbf{Example 1}} Consider the following nonlinear optimal control problem:
\begin{subequations}\label{Ex1:1}
\begin{equation}
	{\text{Minimize }}J = \int_0^1 {\sin (3\,\pi \,t)\,x(t)\,dt}
\end{equation}
\text{subject to } 
\begin{equation}
	\dot x(t) =  - \tan \left( {\frac{\pi }{8}{u^3}(t) + t} \right),\quad t \in [0,1],
\end{equation}
\begin{equation}
	u(t) \in [0,1],\;x(0) = 1,\;x(1) = 0.
\end{equation}
\end{subequations}

This problem was numerically solved in a series of papers; cf. \cite{von1993,Skandari2011,Tohidi2013}. The work presented in the latest article of this series, Ref. \cite{Tohidi2013}, adopted a linearization of the nonlinear dynamical system via a linear combination property of intervals followed by a ``\textit{random}'' interval partitioning (three switching points were chosen randomly) and an integral reformulation of the multidomain dynamical system. The transformed problem was then collocated at the Legendre-Gauss-Lobatto points and the involved integrals were approximated using the Legendre-Gauss-Lobatto quadrature rule. Moreover, the control and state functions were approximated by piecewise constants and piecewise polynomials, respectively. 

We applied the present GISE method for solving the problem numerically using the parameter settings $N_k = 14, L_{x,k} = L_{u,k} = 8, M_k = 16, \bar M_k = 4, \bar N_k = \bar L_{x,k} = \bar L_{u,k} = 4, N_{k,\max} = L_{x,k,\max} = L_{u,k,\max} = 20$, for all $ k \in \mathbb{K}, \alpha = 0.2, \epsilon_{\mathbf{R}} = 10^{-2}, \epsilon_{\text{coeff}} = 10^{-1}, \rho = 3, k_{\max} = 20$, and $\epsilon_{ES} = 0.1$. All state and control coefficients were initially set to one. Figure \ref{fig:Ex1StateControl} shows a sketch of the obtained approximate optimal state and control profiles on $[0,1]$ with a reported approximate optimal cost function value of $0.104$. In contrast with the work of \cite{Tohidi2013}, the present GISE method endowed with the proposed adaptive strategy finds no points of discontinuities/nonsmoothness, as the state and control functions appear to be sufficiently smooth on the interval $[0,1]$. Therefore, the adaptivity of the method enables a fast implementation using a single collocation grid without any domain partitioning; thus $K = 1$. Figure \ref{fig:Ex1Fun1} shows the plot of the approximate optimal cost functional $J_{N,L_x,L_u}^{(\alpha),*}$, for several values of $N_1, L_{x,1}, L_{u,1},$ and $\alpha$. As observed from the figure, the reported approximate optimal cost function values approach $0.1$ for increasing values of collocation points and spectral coefficients in a close agreement with the results obtained by \cite{von1993,Skandari2011,Tohidi2013}. Figure \ref{fig:Ex1Coeff1} manifests further the corresponding exponential (spectral) decay of the last optimal coefficients in the state and control truncated series, $\left| {a_{1,{L_{x,1}}}^{(1),*}} \right|$ and $\left| {b_{1,{L_{u,1}}}^{(1),*}} \right|$.  

In fact, Figure \ref{fig:Ex1Coeff1} shows an interesting behavior of the GISE method. In particular, the figure shows that the last optimal coefficients in the state and control shifted Gegenbauer truncated series generally decay faster for negative values of the Gegenbauer parameter $\alpha$ than for positive values, and this deterioration phenomenon seems to happen monotonically as the value of $\alpha$ approaches $-0.5$. This numerical simulation is in close consensus with the work of \cite{Elgindy2016} on the numerical solution of the second-order one-dimensional hyperbolic telegraph equation using a shifted Gegenbauer pseudospectral method. In particular, the latter showed theoretically that the coefficients of the bivariate shifted Gegenbauer expansions decay faster for negative $\alpha$-values than for non-negative $\alpha$-values, but the asymptotic truncation error as the number of collocation points grows largely is minimized in the infinity norm (Chebyshev norm) exactly at $\alpha = 0$; i.e., when applying the shifted Chebyshev basis polynomials. Figure \ref{fig:Ex1Fun1} indicates that collocations at negative values of $\alpha$ close to $-0.5$ is not to be endorsed for increasing values of collocation points and expansion terms. In particular, while the values of $J_{N,L_x,L_u}^{(\alpha),*}$ seem to be matching for almost all of the $\alpha$-values used in the numerical simulation, a peak in the surface of the approximate optimal cost function is clearly observed at $\alpha = -0.4$, for $N_1 = 16$, indicating a poor approximation in this case. On the other hand, \cite{Elgindy2013} pointed out that the Gegenbauer quadrature `may become sensitive to round-off errors for positive and large values of the parameter $\alpha$ due to the narrowing effect of the Gegenbauer weight function,' which drives the quadrature to become more extrapolatory. In particular, \cite{Elgindy2013} identified the range $-1/2 + \varepsilon \le \alpha \le r$, as a preferable choice to construct the Gegenbauer quadrature, for some relatively small positive number $\varepsilon$ and $r \in [1,2]$. We shall refer to the interval $[-1/2 + \varepsilon, r]$ by \textit{``the Gegenbauer collocation interval of choice,''} and denote it by ${I_{\varepsilon,r}^G}$. Figure \ref{Ex1Fun1_ExpandedPerfect} shows a sketch of the approximate optimal cost functional $J^{(\alpha),*}_{N,L_x,L_u}$ for $N_1 = 6(2)16, L_{x,1} = L_{u,1} = \left\lceil {N/2} \right\rceil  + 1$, and $\alpha = 1(0.5)10$, where we can clearly see the rise of hills in the surface profile for increasing values of $\alpha \notin {I_{\varepsilon,r}^G}$ demonstrating poor approximations for such $\alpha$-values. This formation of hills for increasing values of $\alpha \notin {I_{\varepsilon,r}^G}$ is salient as well in the surface profiles of the corresponding magnitudes of the last coefficients in the state and control truncated series; cf. Figure \ref{Ex1Coeff1ExpandedPerfect}. In general, we largely endorse the following rule of thumb.

\paragraph{\textbf{Rule of Thumb}} \textit{It is generally advantageous to collocate Problem 3 for values of $\alpha \in {I_{\varepsilon,r}^G}$ for small/medium numbers of collocation points and Gegenbauer expansion terms; however, collocations at the shifted Chebyshev-Gauss points should be put into effect for large numbers of collocation points and Gegenbauer expansion terms if the approximations are sought in the Chebyshev norm.}\\

We shall further examine experimentally this rule of thumb in the next example, where the exact control function is given in closed form.

\begin{figure}[ht]
\centering
\includegraphics[scale=0.7]{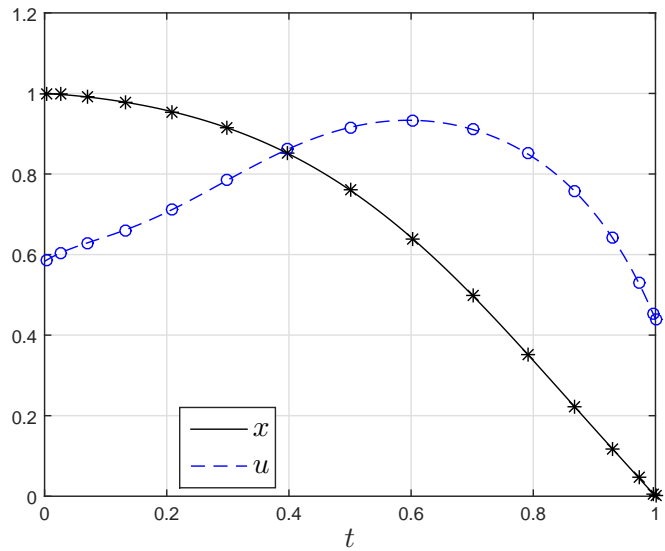}
\caption{The figure shows the plot of the approximate state and control functions of Example 1 on $[0,1]$ using $N_1 = 14, L_{x,1} = L_{u,1} = 8$, and $\alpha = 0.2$. The plot was generated using $100$ linearly spaced nodes from $0$ to $1$.}
\label{fig:Ex1StateControl}
\end{figure}
\begin{figure}[ht]
\centering
\includegraphics[scale=0.75]{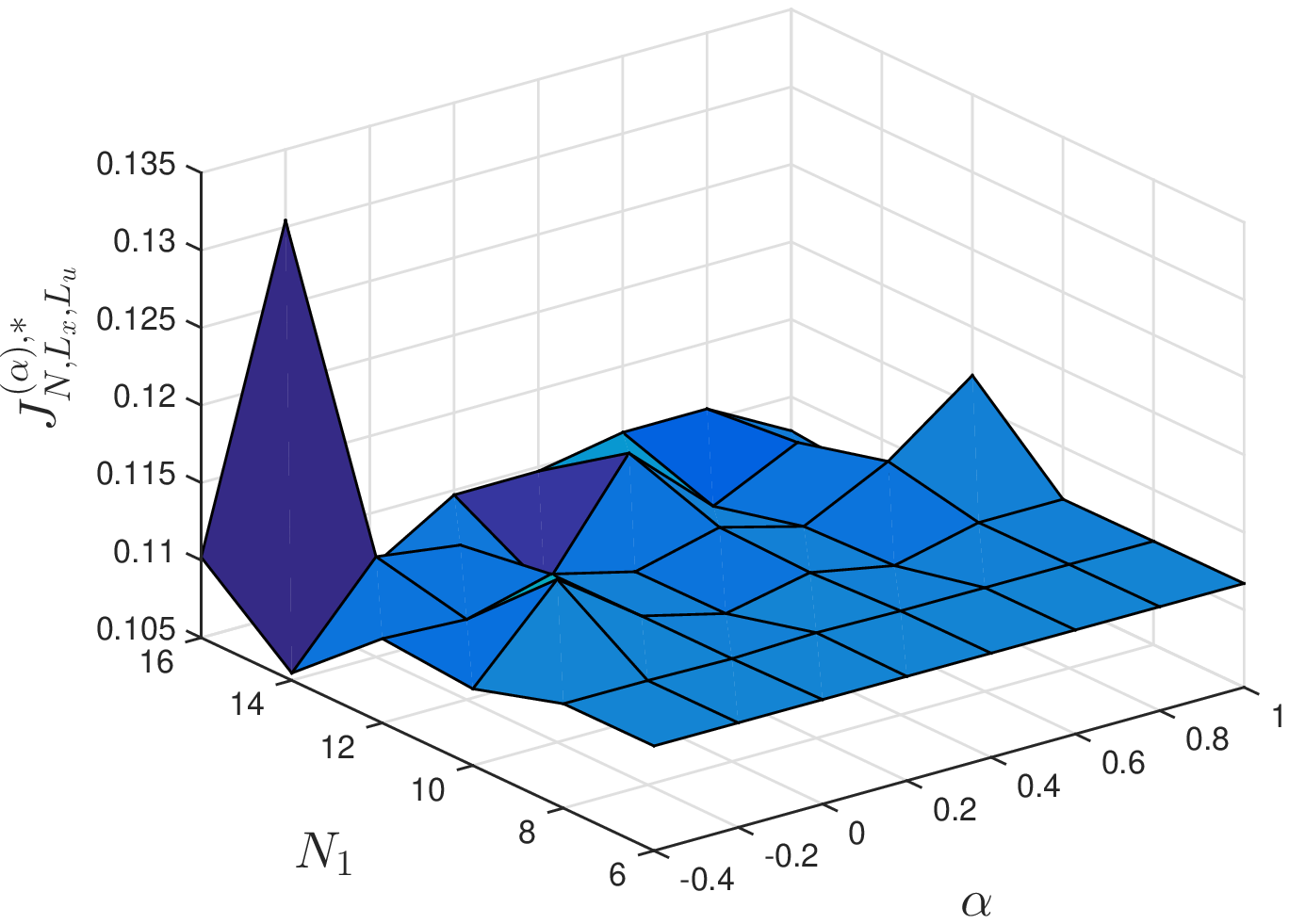}
\caption{The figure shows the plot of the approximate optimal cost functional $J^{(\alpha),*}_{N,L_x,L_u}$ of Example 1 for $N_1 = 6(2)16, L_{x,1} = L_{u,1} = \left\lceil {N/2} \right\rceil  + 1$, and $\alpha = -0.4(0.2)1$.}
\label{fig:Ex1Fun1}
\end{figure}
\begin{figure}[ht]
\centering
\includegraphics[scale=0.85]{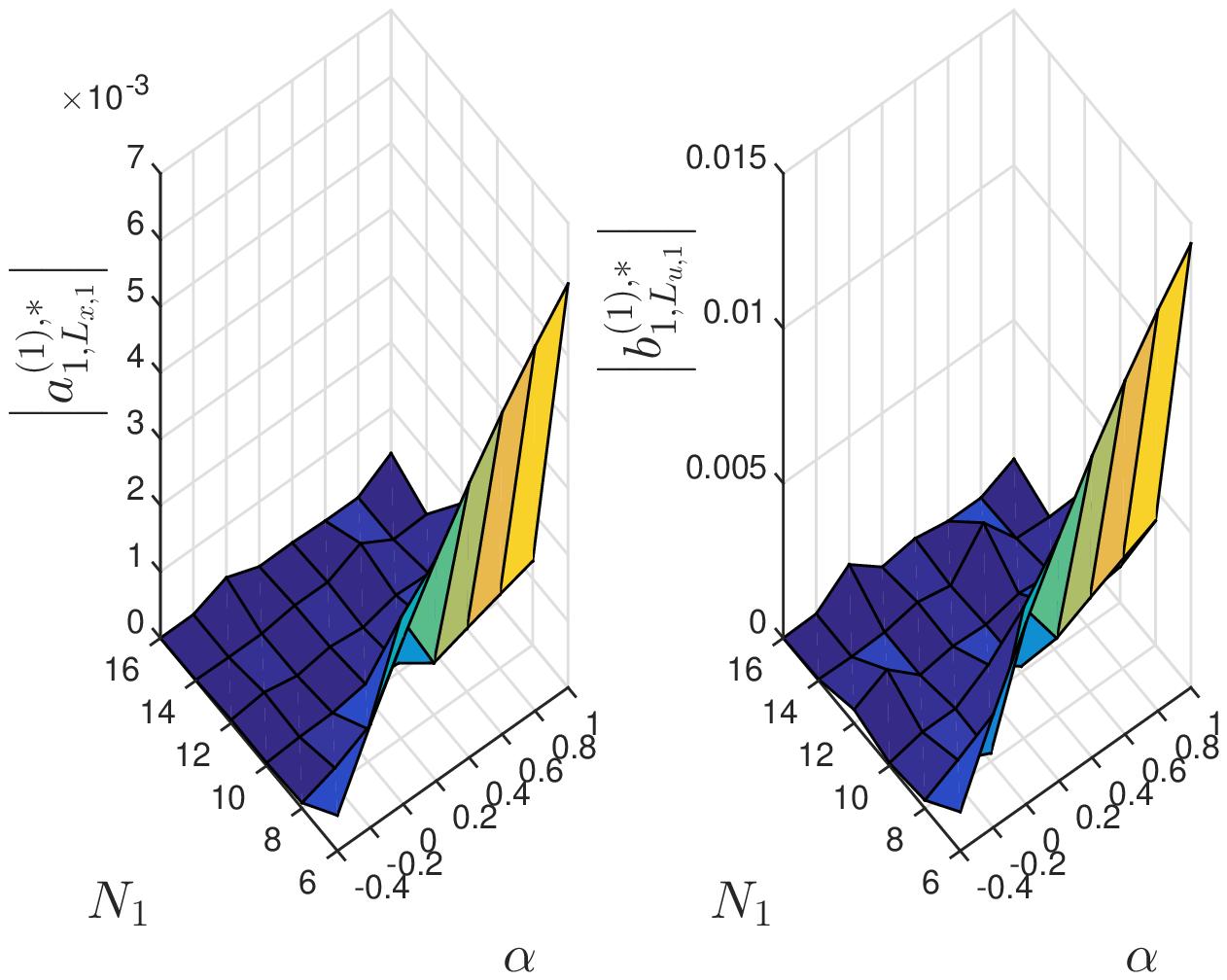}
\caption{The figure shows the magnitudes of the last coefficients in the state and control truncated series of Example 1 for $N_1 = 6(2)16, L_{x,1} = L_{u,1} = \left\lceil {N/2} \right\rceil  + 1$, and $\alpha = -0.4(0.2)1$.}
\label{fig:Ex1Coeff1}
\end{figure}
\begin{figure}[ht]
\centering
\includegraphics[scale=0.55]{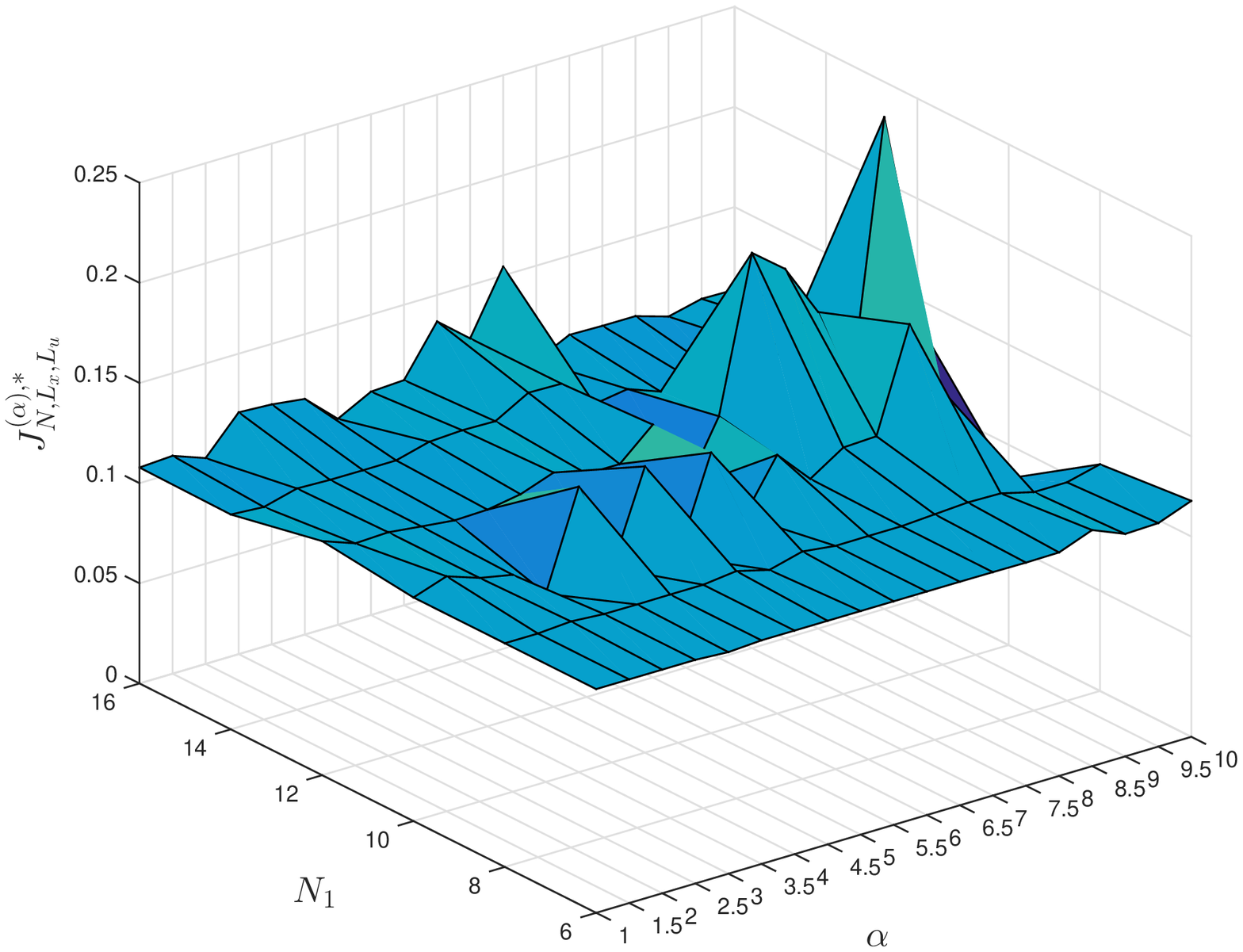}
\caption{The figure shows the plot of the approximate optimal cost functional $J^{(\alpha),*}_{N,L_x,L_u}$ of Example 1 for $N_1 = 6(2)16, L_{x,1} = L_{u,1} = \left\lceil {N/2} \right\rceil  + 1$, and $\alpha = 1(0.5)10$.}
\label{Ex1Fun1_ExpandedPerfect}
\end{figure}
\begin{figure}[ht]
\centering
\includegraphics[scale=0.35]{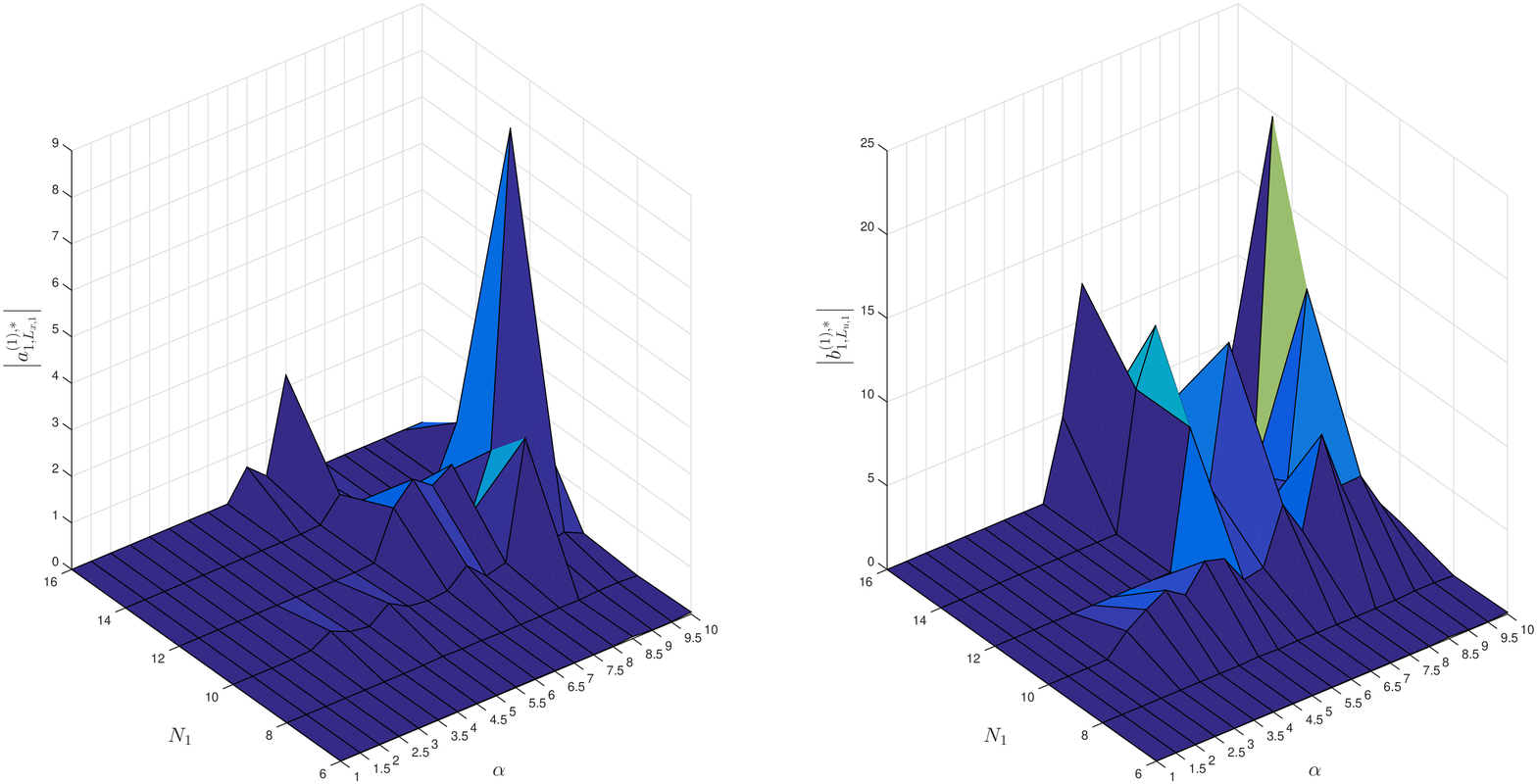}
\caption{The figure shows the magnitudes of the last coefficients in the state and control truncated series of Example 1 for $N_1 = 6(2)16, L_{x,1} = L_{u,1} = \left\lceil {N/2} \right\rceil  + 1$, and $\alpha = 1(0.5)10$.}
\label{Ex1Coeff1ExpandedPerfect}
\end{figure}

\paragraph{\textbf{Example 2}} Consider the popular Breakwell problem \cite{Ho1975,Gong2006,Marzban2013}:
\begin{subequations}\label{Ex2:2}
\begin{equation}
	{\text{Minimize }}J = \frac{1}{2}\int_0^1 {{u^2}(t)\,dt}
\end{equation}
	\text{subject to } 
\begin{equation}
	{{\dot x}_1}(t) = {x_2}(t),\quad t \in [0,1],
\end{equation}
\begin{equation}
	{{\dot x}_2}(t) = u(t),\quad t \in [0,1],
\end{equation}
\begin{equation}
	{x_1}(0) = {x_1}(1) = 0,
\end{equation}
\begin{equation}
	{x_2}(0) =  - {x_2}(1) = 1,
\end{equation}
\begin{equation}
	{x_1}(t) \le 0.1.
\end{equation}
\end{subequations}
The exact control function to this problem is given by
\begin{equation}
	{u^*}(t) = \left\{ \begin{array}{l}
200\,t/9 - 20/3,\quad t \in [0,0.3],\\
0,\quad t \in [0.3,0.7],\\
 - 200\,t/9 + 140/9,\quad t \in [0.7,1],
\end{array} \right.
\end{equation}
and the optimal cost function value is $J^* = 40/9$. This problem was numerically solved by \cite{Gong2006} using a direct Legendre pseudospectral, and the collocation was performed at the Legendre-Gauss-Lobatto quadrature nodes; however, the observed convergence rate was very slow due to the lack of smoothness of the optimal solution. Later, \cite{Marzban2013} numerically solved the problem using a direct composite Chebyshev finite difference method based on a hybrid of block-pulse functions and Chebyshev polynomials, and the implementation was carried out using Chebyshev-Gauss-Lobatto points. 

We implemented the present GSE for solving the problem numerically using the parameter settings $N_k = 18, L_{x,k} = L_{u,k} = 17, M_k = 16, \bar M_k = 4, \bar N_k = \bar L_{x,k} = \bar L_{u,k} = 4, N_{k,\max} = L_{x,k,\max} = L_{u,k,\max} = 30$, for all $ k \in \mathbb{K}, \alpha = 0.5, \epsilon_{\mathbf{R}} = 10^{-2}, \epsilon_{\text{coeff}} = 10^{-3}, \rho = 1.5, k_{\max} = 20$, and $\epsilon_{ES} = 0.1$. All state and control coefficients were initially set to zero. The computational algorithm breaks the transformed interval $[-1,1]$ into the three domains $\bm{\Omega}_1 = [-1, -0.3906], \bm{\Omega}_2 = [-0.3906,0.3906]$, and $\bm{\Omega}_3 = [0.3906,1]$; thus $K = 3$. These three mesh intervals correspond to the three domains $\bm{\Omega}_{1,\text{orig}} = [0, 0.3047], \bm{\Omega}_{2,\text{orig}} = [0.3047,0.6953]$, and $\bm{\Omega}_{3,\text{orig}} = [0.6953,1]$ of the original optimal control problem. Figure \ref{Ex2_All} shows the plots of the approximate state functions, $x_1(t)$ and $x_2(t)$, exact and approximate control functions, $u^*(t)$ and $u(t)$, respectively, on the interval $[0,1]$. The corresponding approximate optimal cost function value, $J^{(\alpha),*}_{N,L_x,L_u} \approx 4.444477 \approx J^*$. Figure \ref{Ex2_u_err} shows the absolute error of the control function values, $\left| {{u^*}(t) - u(t)} \right|$, on the interval $[0,1]$ in log-lin scale, where a rapid error decay is clearly seen. In contrast, Figure \ref{Ex2_u_err_single} shows the slow convergence of the absolute error on the interval $[0,1]$ in log-lin scale using the same parameter settings, but with a single collocation grid. 

\begin{rem}
We expect to attain faster convergence rates of the present GSE by optimizing the parameters $\rho, \epsilon_{\mathbf{R}}$, and $\epsilon_{\text{coeff}}$ relative to the initial inputs $N_k, L_{x,k}$, and $L_{u,k}$, for all $ k \in \mathbb{K}$, so that the determined edge points are sufficiently close from the discontinuities or points of nonsmoothness. However, we prefer to leave this interesting topic as a future research.
\end{rem}


It is also intriguing to study the numerical effect of different values of $\alpha$ on the numerical scheme. To this end, and for fairness of comparisons, we implemented the GISE method in the absence of the proposed adaptive strategy, and broke the transformed domain $[-1,1]$ at the exact edge points $-0.4$ and $0.4$, which correspond to the original edge points $0.3$ and $0.7$ in $[0,1]$. Figure \ref{Ex2Costn0p4to1} shows the plot of the approximate optimal cost functional $J^{(\alpha),*}_{N,L_x,L_u}$ for $N_k = M_k = 4, L_{x,k} = L_{u,k} = 3$, for all $ k \in \mathbb{K}$, and $\alpha = -0.4(0.1)1$-- such input values lead to a small-scale nonlinear programming problem of dimension $12$. Obviously, the cost functional profile decreases as $\alpha$ progresses away from $-0.5$, then remains nearly steady at about $4.444449$ till $\alpha$ reaches the value $1$. Figure \ref{Ex2Coeff0p4to1} shows the corresponding magnitudes of the last coefficients in the state and control truncated series on the mesh intervals $\bm{\Omega}_{k,\text{orig}}$, for all $ k \in \mathbb{K}$. On the other hand, Figure \ref{Ex2Cost1to20} shows a sketch of the approximate optimal cost functional profile for $\alpha = 1(1)20$, where we see a significant rise in the functional value reported at $\alpha = 9, 12, 16(1)18, 20$. This increase in the functional value is captured very clearly in the profile of the corresponding magnitudes of the last coefficients in the state and control truncated series on the mesh intervals $\bm{\Omega}_{k,\text{orig}}$, for all $ k \in \mathbb{K}$; cf. Figure \ref{Ex2Coeff1to20}. In particular, we can see some few jumps and increases in the last coefficients' magnitudes exactly at the reported $\alpha$-values. Therefore, for the given parameter settings, we can consider the set ${I_{0.4,2}^G}$ as a feasible Gegenbauer collocation interval of choice.
To investigate further the convergence of the GISE method for the same two sets of possible $\alpha$-values, we implemented the method using the relatively medium values $N_k = M_k = L_{x,k} = L_{u,k} = 14$, for all $ k \in \mathbb{K}$, which lead to a medium-scale nonlinear programming problem of dimension $45$. Figure \ref{Ex2Costn0p4to122} shows the plot of the approximate optimal cost functional $J^{(\alpha),*}_{N,L_x,L_u}$ for $\alpha = -0.4(0.1)1$, where a reduction in its value is generally observed for increasing values of $\alpha$ till it arrives at a nearly uniform state at about $4.44$ for $\alpha \ge -0.2$. Moreover, the corresponding magnitudes of the last coefficients in the state and control truncated series remain bounded below $10^{-5}$ and $10^{-8}$, respectively, on the mesh intervals $\bm{\Omega}_{k,\text{orig}}$, for all $ k \in \mathbb{K}$; cf. Figure \ref{Ex2Coeff0p4to122}. On the other hand, Figure \ref{Ex2Costn0p4to12244} shows a sketch of the approximate optimal cost functional profile for $\alpha = 1(1)20$, where we see some wild escalations in the functional value at $\alpha = 3$ and $11$ accompanied by another significant rise for $\alpha > 13$. This serious degradation in the accuracy of the functional value is also observed very clearly in the profile of the corresponding magnitudes of the last coefficients in the state and control truncated series on the mesh intervals $\bm{\Omega}_{1,\text{orig}}$, for all $ k \in \mathbb{K}$; in particular, the last coefficients' magnitudes generally soar rapidly as $\alpha$ increases; cf. Figure \ref{Ex2Coeff0p4to12244}. Hence, for the given parameter settings, the set ${I_{0.3,2}^G}$ is a feasible Gegenbauer collocation interval of choice. Figure \ref{fig:All} shows further the absolute errors of the control function values, $\left| {{u^*}(t) - u(t)} \right|$, on the interval $[0,1]$ in log-lin scale using 
$\alpha = -0.4(0.2)0.2,0.5,1,2,10,20$. High-order control approximations are achieved in all cases, except for $\alpha = -0.4, 10$, and $20$, where a significant recession in accuracy is reported near the time boundary points $t = 0$ and $t = 1$ in the former case, while a tangible error growth in the vicinity of the right endpoint $t = 1$ is reported in the latter two cases.

\begin{rem}
A typical hp-pseudospectral method would normally apply a square differentiation/integration matrix of size $N_k + 1$, for all $ k \in \mathbb{K}$ to compute the derivatives/integrals involved in the optimal control problem; thus requires $(1 + N_k) (1 + 2 N_k)$ FLOPS to evaluate the derivatives of a real-valued differentiable function at a set of $N_k + 1$ collocation points for each $k \in \mathbb{K}$, or the definite integrals of an integrable function using the same sets of collocation points as the upper limits of the integrations. In contrast, the KESOBGIM requires $(1 + 2 M_k) (1 + N_k)$ FLOPS to evaluate the needed integrals. To prevent an enormous amount of calculations, we can set $M_k$ at a relatively medium value, say, $16$-- usually sufficient to achieve nearly full machine precision approximations to the integrals of well-behaved functions-- for large values of $N_k$, for all $ k \in \mathbb{K}$. For instance, using $N_k = 100$ and $M_k = 16$, for all $ k \in \mathbb{K}$, we can evidently count a substantial difference of $16968$ FLOPS between the developed KESOBGIM and a standard operational matrix of differentiation/integration for each derivative/integral per mesh interval. To visualize the big picture, notice that the discretization of the present optimal control problem requires the evaluation of a single integral for the cost functional, and $\sum\nolimits_{k = 1}^3 {\left( {{N_k} + 2} \right)}$ integrals involved in Eqs. \eqref{eq:discdynsys1}, for a total of $307$ integral evaluations. Working out the mathematics, it is not hard to realize a remarkable gap of $5209176$ FLOPS in favor of the present GISE method endowed with the KESOBGIM!
\end{rem}

\begin{rem}
The current study casts the light on the judicious choice of the KESGG collocation points set, $\mathbb{S}_{N_k}^{(\alpha)}$, to be utilized on each mesh interval $\bm{\Omega}_k$, for all $ k \in \mathbb{K}$ during the discretization process of optimal control problems. In particular, the current work supports collocations performed at $\mathbb{S}_{N_k}^{(\alpha^{(k)})}: \alpha^{(k)} \in I_{\varepsilon,r}^G$, for small/medium numbers of collocation points and Gegenbauer expansion terms. Nonetheless, it would be extremely beneficial to determine theoretically the optimal collocation sets $\mathbb{S}_{N_k}^{\alpha^{(k),*}}$ for each domain $\bm{\Omega}_k$, for all $ k \in \mathbb{K}$-- \textbf{a question which yet remains open}. 
\end{rem}

\begin{figure}[ht]
\centering
\includegraphics[scale=0.7]{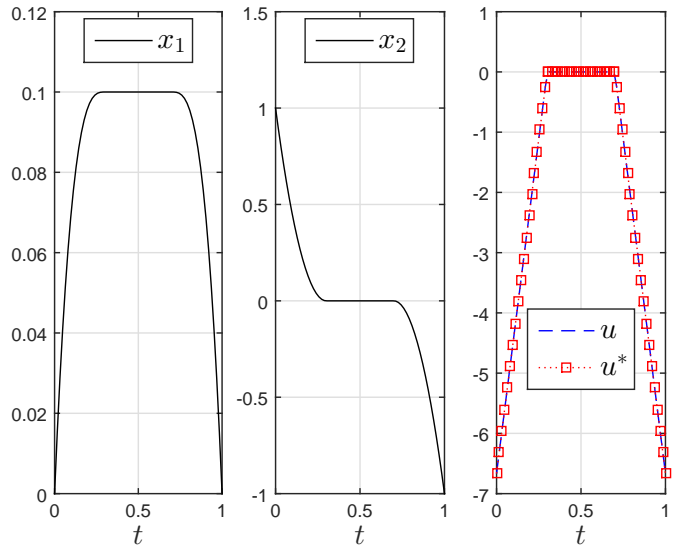}
\caption{The figure shows the plots of the approximate state functions, $x_1(t)$ and $x_2(t)$ (left and middle), exact and approximate control functions, $u^*(t)$ and $u(t)$ (right), respectively, of Example 2 on the interval $[0,1]$ using the initial values $N_k = 18, L_{x,k} = L_{u,k} = 17\;\forall k \in \mathbb{K}$, and $\alpha = 0.5$. The plots were generated using $20$ linearly spaced nodes in each domain $\bm{\Omega}_{k,\text{orig}}$, for all $ k \in \mathbb{K}$.}
\label{Ex2_All}
\end{figure}
\begin{figure}[ht]
\centering
\subfigure[]{
\includegraphics[scale=0.525]{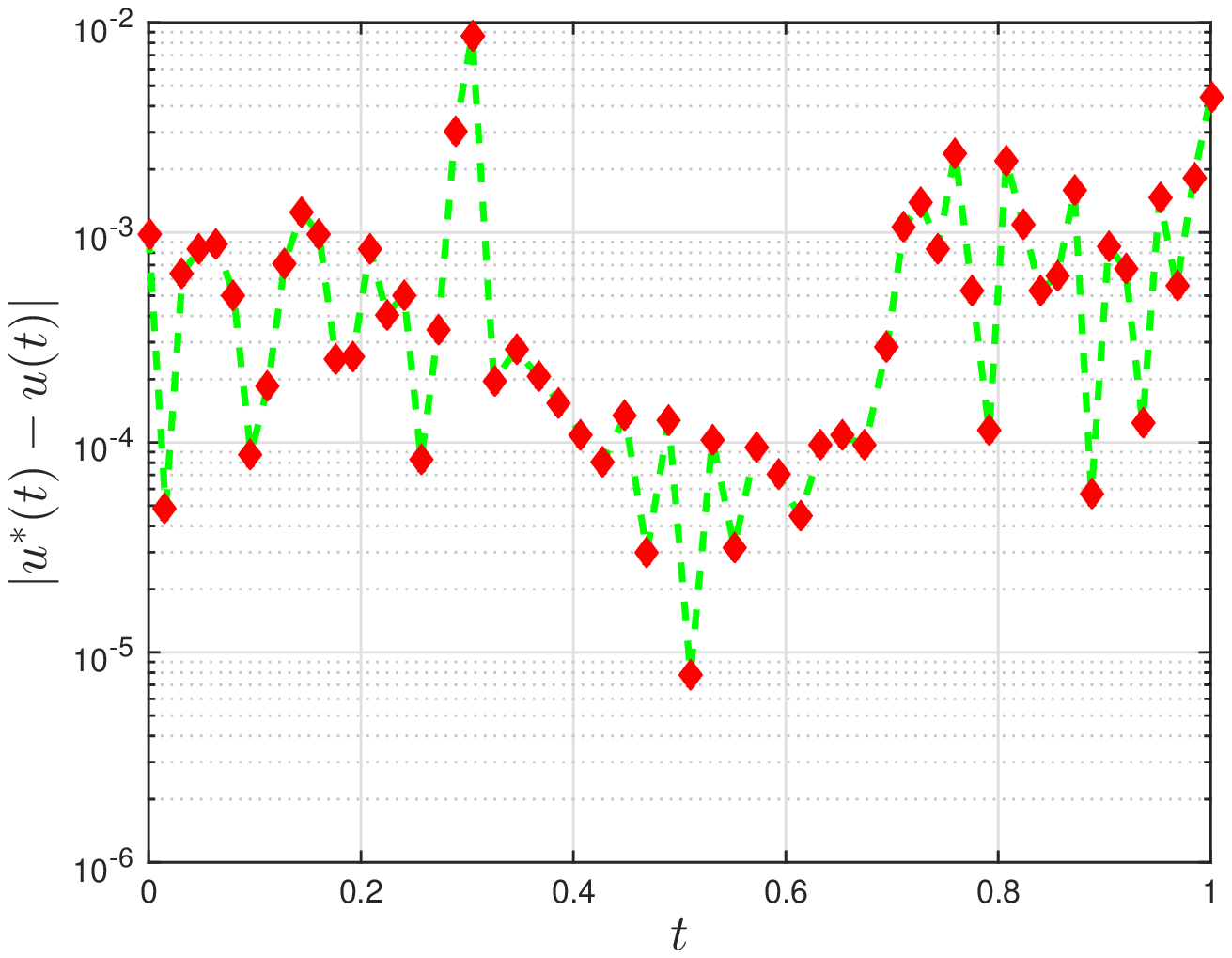}
\label{Ex2_u_err}
}
\subfigure[]{
\includegraphics[scale=0.525]{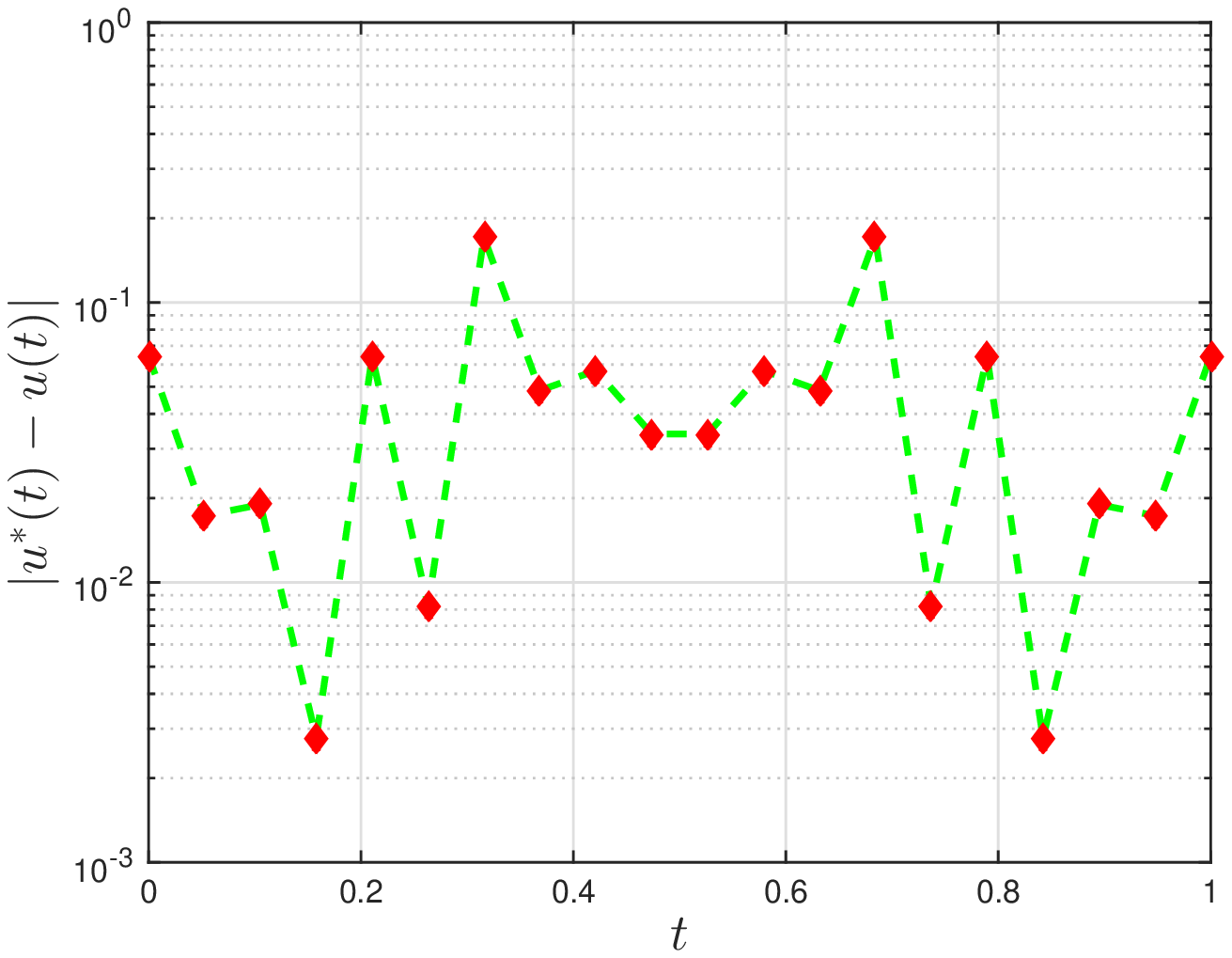}
\label{Ex2_u_err_single}
}
\caption{Figure \subref{Ex2_u_err} shows the absolute error of the control function values, $\left| {{u^*}(t) - u(t)} \right|$, of Example 2 on the interval $[0,1]$ in log-lin scale using the initial values $N_k = 18, L_{x,k} = L_{u,k} = 17$, for all $ k \in \mathbb{K}$, and $\alpha = 0.5$. The plot was generated using $20$ linearly spaced nodes in each domain $\bm{\Omega}_{k,\text{orig}}$, for all $ k \in \mathbb{K}$. Figure \subref{Ex2_u_err_single} shows the absolute error of the control function values $\left| {{u^*}(t) - u(t)} \right|$ on the interval $[0,1]$ in log-lin scale using a single collocation grid. The plot was generated using $20$ linearly spaced nodes in $[0,1]$.}
\label{fig:Ex2specific1}
\end{figure}
\begin{figure}[ht]
\centering
\includegraphics[scale=0.6]{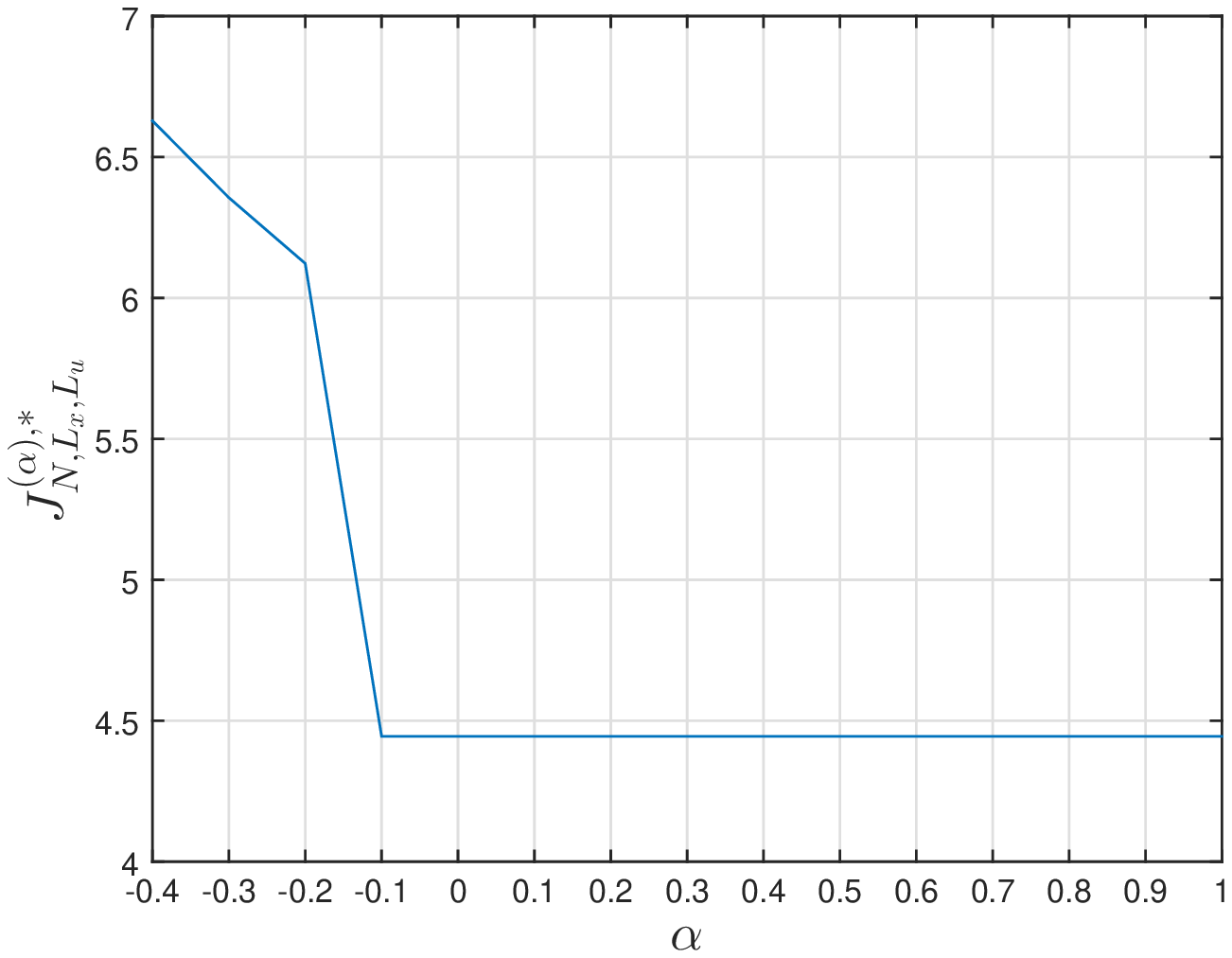}
\caption{The figure shows the plot of the approximate optimal cost functional $J^{(\alpha),*}_{N,L_x,L_u}$ of Example 2 for $N_k = M_k = 4, L_{x,k} = L_{u,k} = 3$, for all $ k \in \mathbb{K}$, and $\alpha = -0.4(0.1)1$.}
\label{Ex2Costn0p4to1}
\end{figure}
\begin{figure}[ht]
\centering
\includegraphics[scale=0.35]{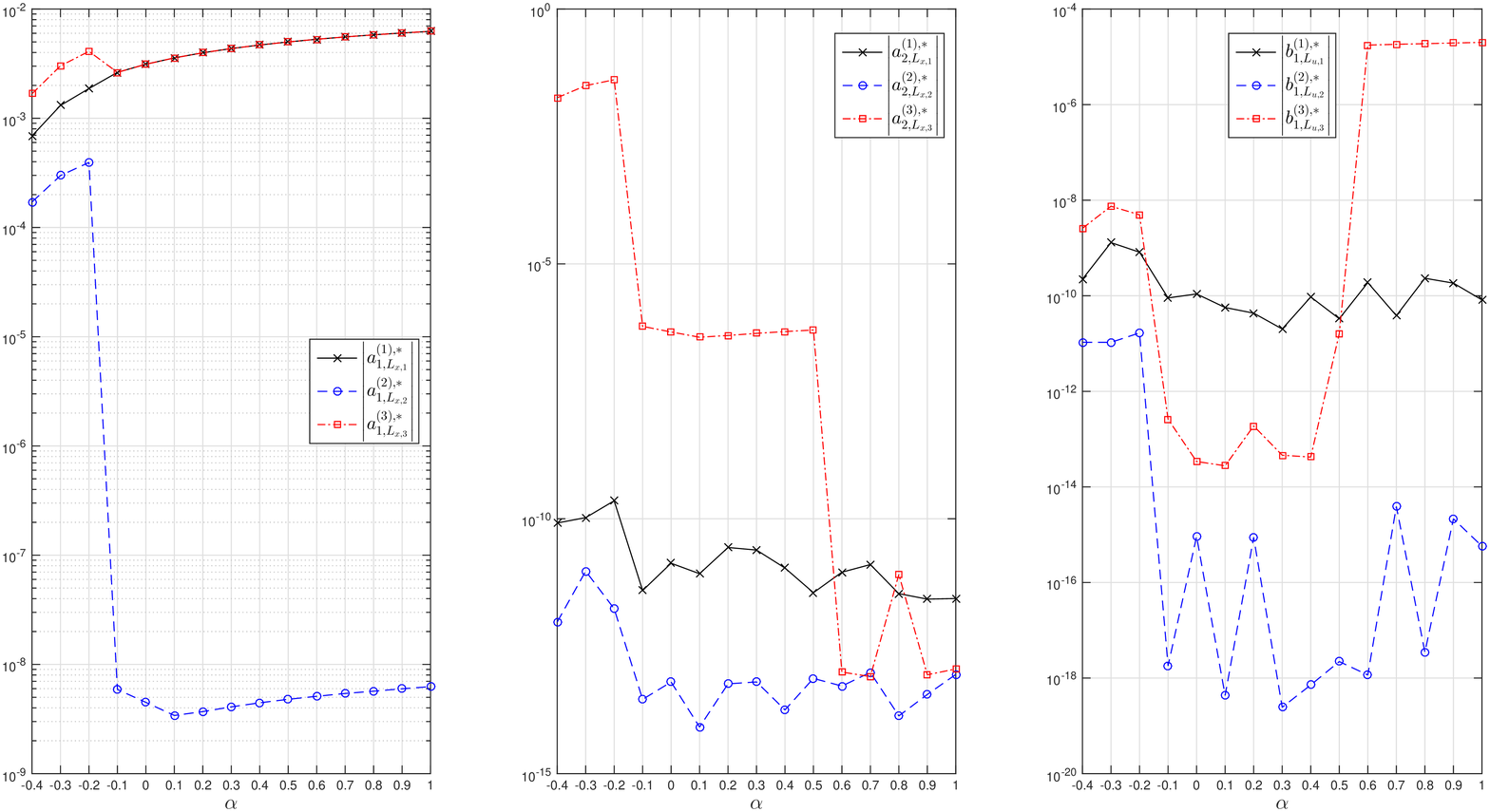}
\caption{The figure shows the magnitudes of the last coefficients in the state and control truncated series of Example 2 on the mesh intervals $\bm{\Omega}_{1,\text{orig}} = [0,0.3], \bm{\Omega}_{2,\text{orig}} = [0.3,0.7]$, and $\bm{\Omega}_{3,\text{orig}} = [0.7,1]$ using $N_k = M_k = 4, L_{x,k} = L_{u,k} = 3$, for all $ k \in \mathbb{K}$, and $\alpha = -0.4(0.1)1$.}
\label{Ex2Coeff0p4to1}
\end{figure}
\begin{figure}[ht]
\centering
\includegraphics[scale=0.6]{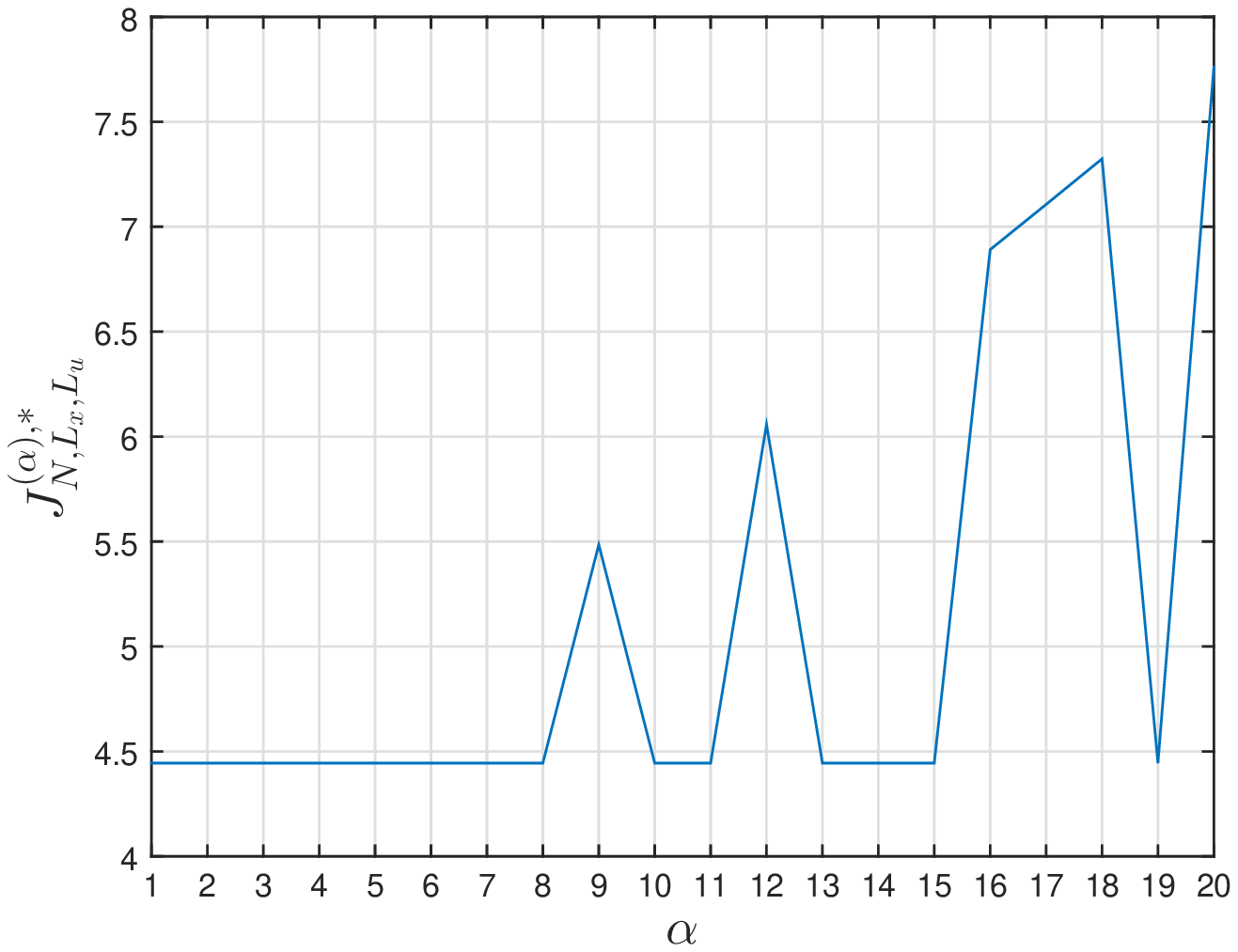}
\caption{The figure shows the plot of the approximate optimal cost functional $J^{(\alpha),*}_{N,L_x,L_u}$ of Example 2 for $N_k = M_k = 4, L_{x,k} = L_{u,k} = 3$, for all $ k \in \mathbb{K}$, and $\alpha = 1(1)20$.}
\label{Ex2Cost1to20}
\end{figure}
\begin{figure}[ht]
\centering
\includegraphics[scale=0.35]{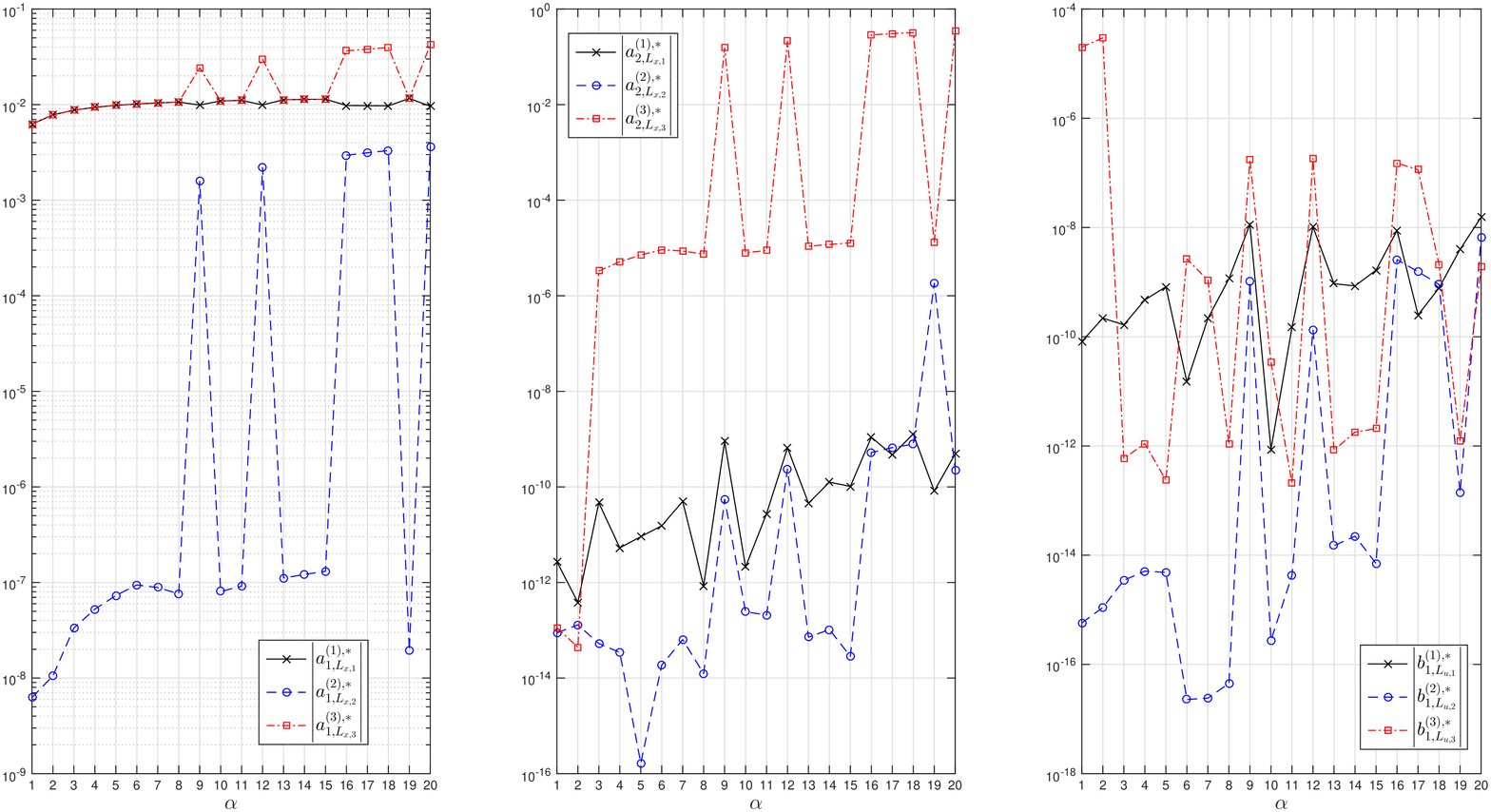}
\caption{The figure shows the magnitudes of the last coefficients in the state and control truncated series of Example 2 on the mesh intervals $\bm{\Omega}_{1,\text{orig}} = [0,0.3], \bm{\Omega}_{2,\text{orig}} = [0.3,0.7]$, and $\bm{\Omega}_{3,\text{orig}} = [0.7,1]$ using $N_k = M_k = 4, L_{x,k} = L_{u,k} = 3$, for all $ k \in \mathbb{K}$, and $\alpha = 1(1)20$.}
\label{Ex2Coeff1to20}
\end{figure}
\begin{figure}[ht]
\centering
\includegraphics[scale=0.6]{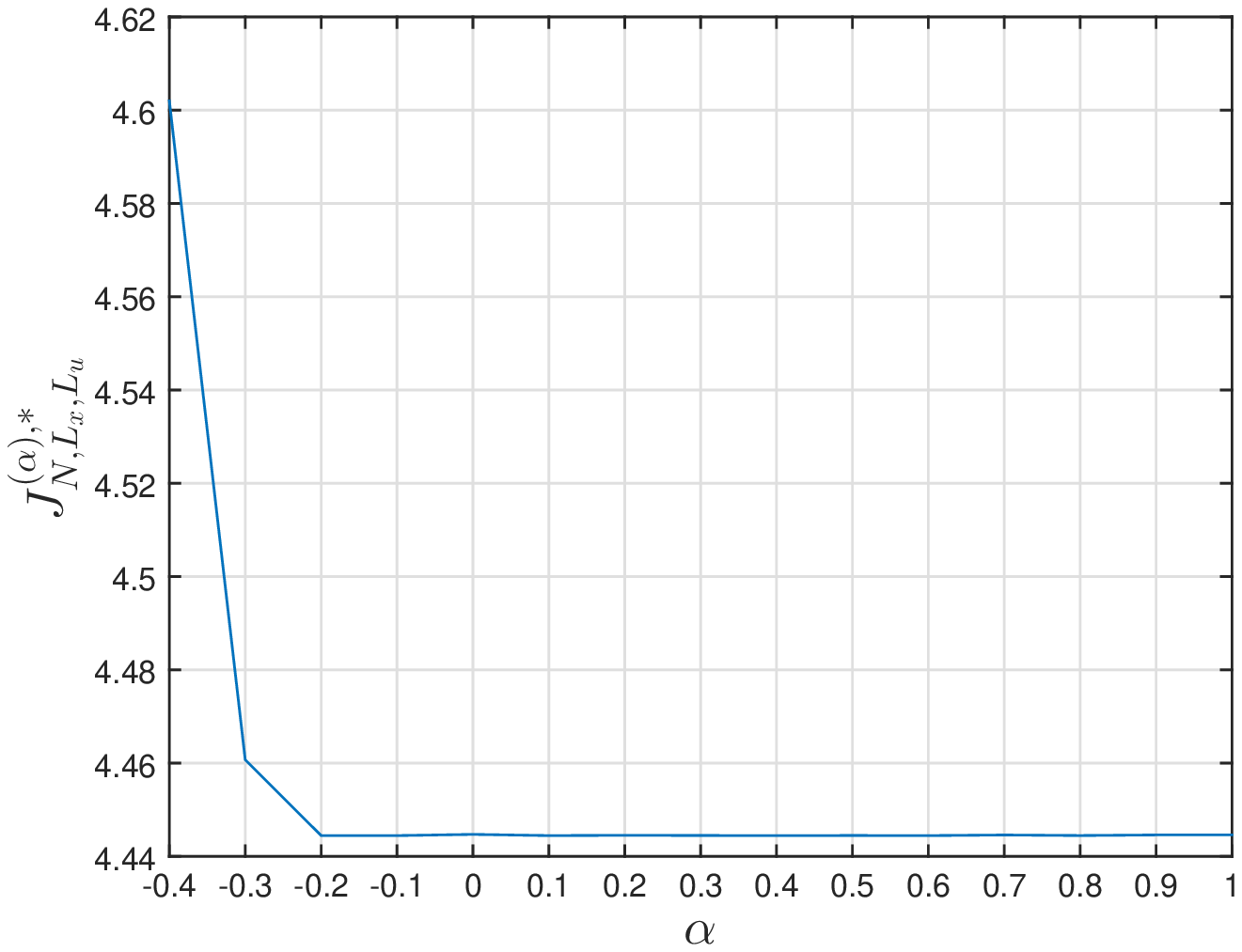}
\caption{The figure shows the plot of the approximate optimal cost functional $J^{(\alpha),*}_{N,L_x,L_u}$ of Example 2 for $N_k = M_k = L_{x,k} = L_{u,k} = 14$, for all $ k \in \mathbb{K}$, and $\alpha = -0.4(0.1)1$.}
\label{Ex2Costn0p4to122}
\end{figure}
\begin{figure}[ht]
\centering
\includegraphics[scale=0.35]{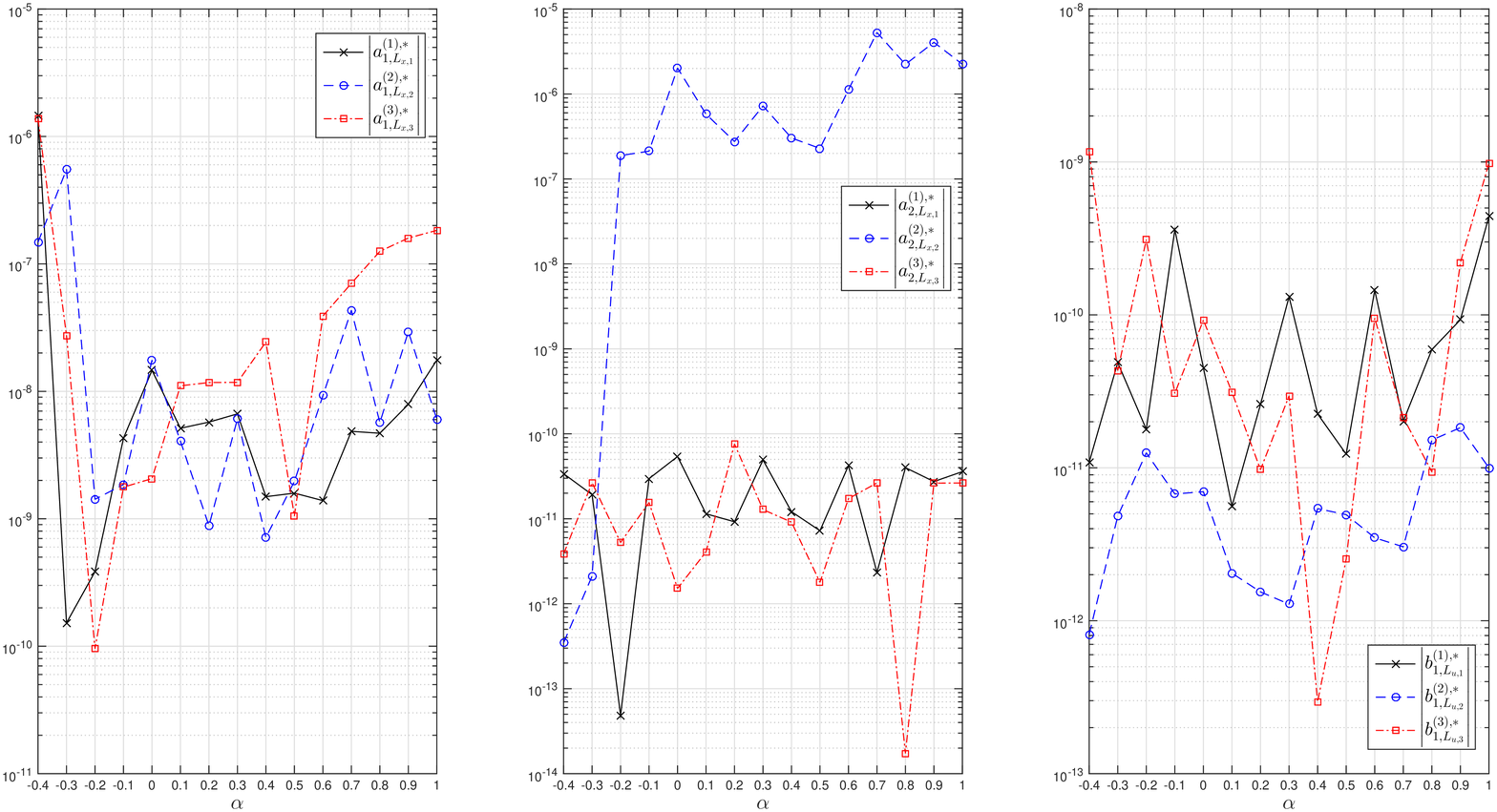}
\caption{The figure shows the magnitudes of the last coefficients in the state and control truncated series of Example 2 on the mesh intervals $\bm{\Omega}_{1,\text{orig}} = [0,0.3], \bm{\Omega}_{2,\text{orig}} = [0.3,0.7]$, and $\bm{\Omega}_{3,\text{orig}} = [0.7,1]$ using $N_k = M_k = L_{x,k} = L_{u,k} = 14$, for all $ k \in \mathbb{K}$, and $\alpha = -0.4(0.1)1$.}
\label{Ex2Coeff0p4to122}
\end{figure}
\begin{figure}[ht]
\centering
\includegraphics[scale=0.6]{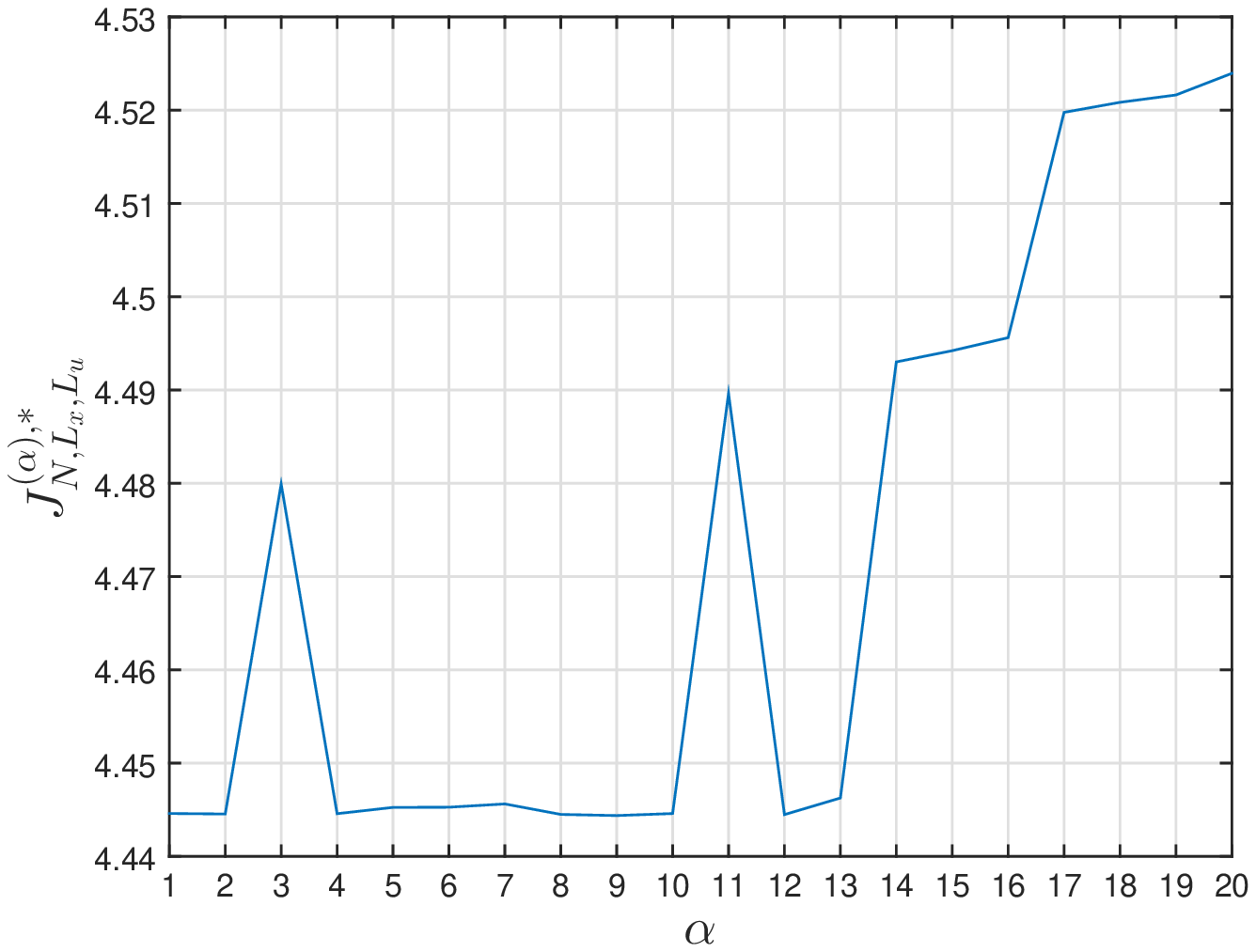}
\caption{The figure shows the plot of the approximate optimal cost functional $J^{(\alpha),*}_{N,L_x,L_u}$ of Example 2 for $N_k = M_k = L_{x,k} = L_{u,k} = 14$, for all $ k \in \mathbb{K}$, and $\alpha = 1(1)20$.}
\label{Ex2Costn0p4to12244}
\end{figure}
\begin{figure}[ht]
\centering
\includegraphics[scale=0.35]{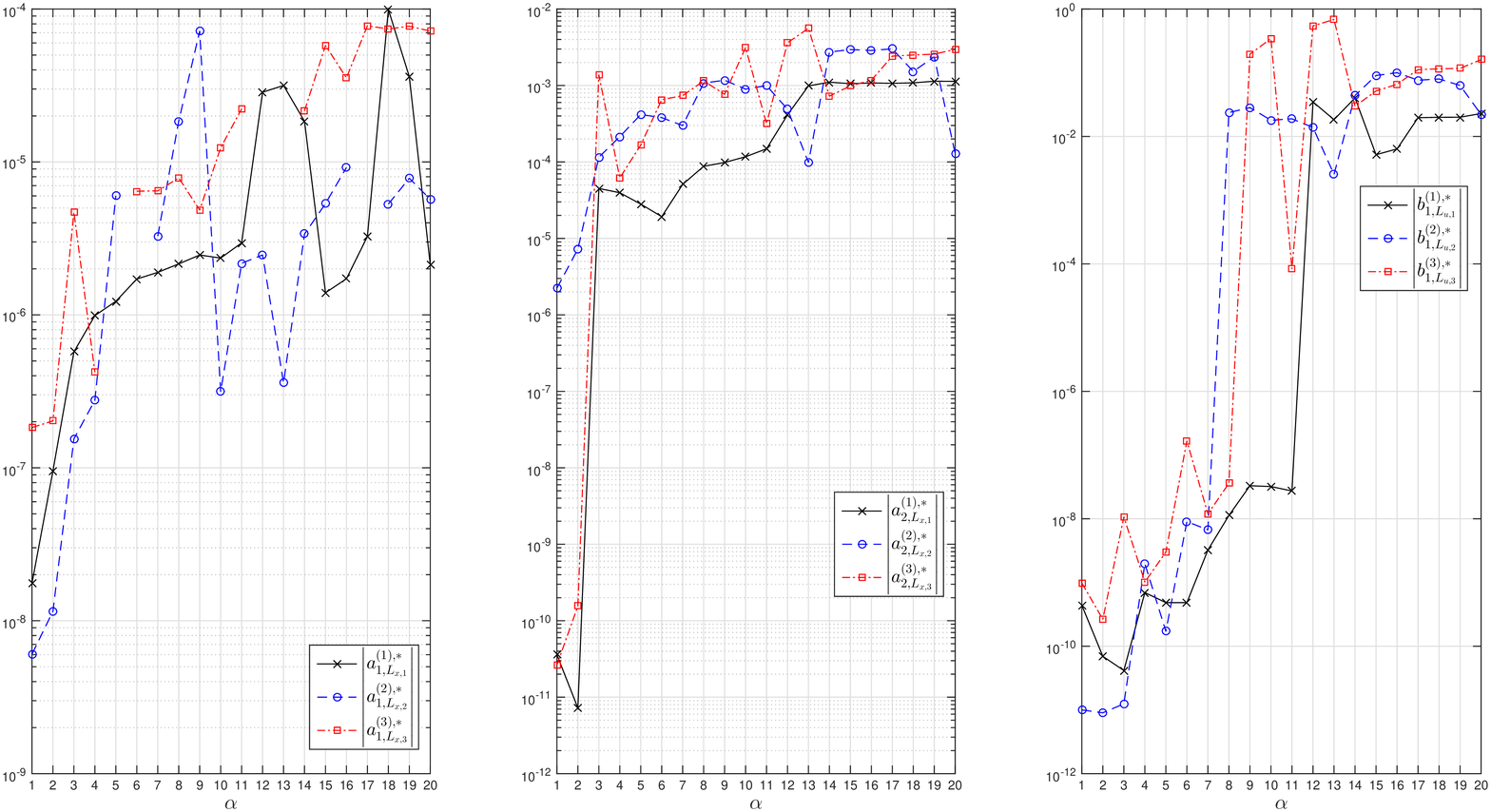}
\caption{The figure shows the magnitudes of the last coefficients in the state and control truncated series of Example 2 on the mesh intervals $\bm{\Omega}_{1,\text{orig}} = [0,0.3], \bm{\Omega}_{2,\text{orig}} = [0.3,0.7]$, and $\bm{\Omega}_{3,\text{orig}} = [0.7,1]$ using $N_k = M_k = L_{x,k} = L_{u,k} = 14$, for all $ k \in \mathbb{K}$, and $\alpha = 1(1)20$.}
\label{Ex2Coeff0p4to12244}
\end{figure}
\begin{figure}[ht]
\centering
\includegraphics[width=18cm,height=15cm]{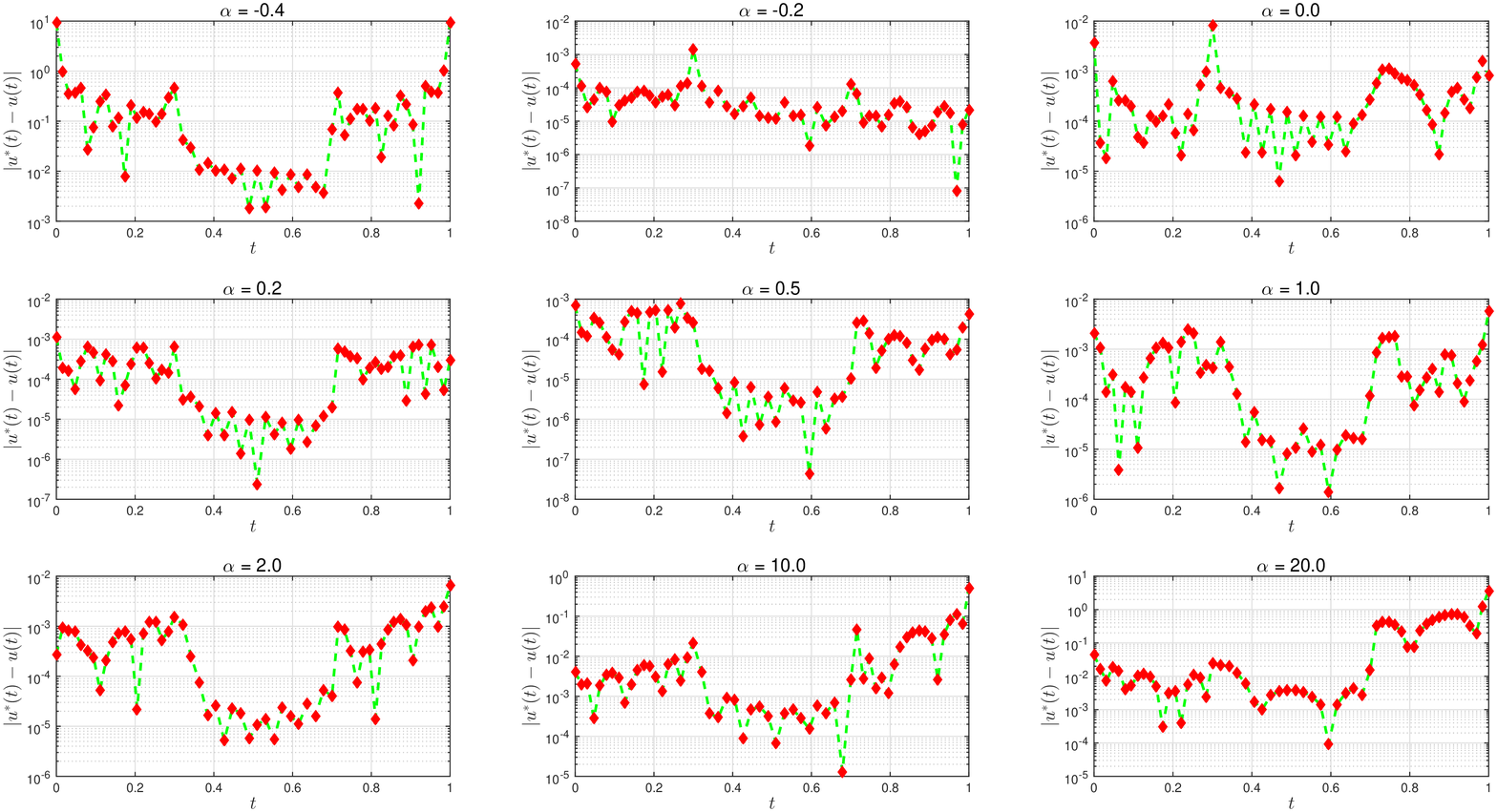}
\caption{The absolute errors of the control function values, $\left| {{u^*}(t) - u(t)} \right|$, of Example 2 on the interval $[0,1]$ in log-lin scale using $N_k = M_k = L_{x,k} = L_{u,k} = 14$, for all $ k \in \mathbb{K}$, and $\alpha = -0.4(0.2)0.2,0.5,1,2,10,20$. All plots were generated using $20$ linearly spaced nodes in each domain $\bm{\Omega}_{k,\text{orig}}$, for all $ k \in \mathbb{K}$.}
\label{fig:All}
\end{figure}

\paragraph{\textbf{Example 3}} Consider the following nonsmooth nonlinear optimal control problem:
\begin{subequations}\label{Ex3:3}
\begin{equation}
	{\text{Minimize }}J = \int_0^2 {u(t) \left( {u(t) - t} \right)\,dt}
\end{equation}
	\text{subject to } 
\begin{equation}
		\dot x(t) =  - \left| {x(t) - 0.5} \right| + \frac{{2\,\left( {u(t) + 1} \right)}}{{t + 2}} - 0.5,
\end{equation}
\begin{equation}
		x(0) = 0.1,
\end{equation}
\begin{equation}
		0 \le x(t) \le 1\quad \forall t \in [0,2],
\end{equation}
\begin{equation}
		- 1 \le u(t) \le 1\quad \forall t \in [0,2].
\end{equation}
\end{subequations}
The exact optimal control and optimal cost functional value of this problem are $u^*(t) = t/2$, for all $ t \in [0,2]$ and $J^* = -2/3$, respectively; cf. \cite{Skandari2016}. This problem was numerically solved very recently by \cite{Skandari2016} using a generalized derivative, which can transform the nonsmooth problem into another smooth optimal control problem under certain assumptions. The smooth problem was then discretized using a standard Chebyshev pseudospectral method integrated with classical Chebyshev differentiation matrices for approximating derivatives and Clenshaw-Curtis quadrature formula to approximate the cost functional. The reported approximate optimal cost functional value was $-0.666666610$ using $21$ Chebyshev-Gauss-Lobatto nodes-- an approximation that is accurate to $7$ significant digits.

We implemented the present GSE for solving the problem numerically using the parameter settings $N_k = 5, L_{x,k} = L_{u,k} = 6, M_k = 16, \bar M_k = 6, \bar N_k = \bar L_{x,k} = \bar L_{u,k} = 2, N_{k,\max} = L_{x,k,\max} = L_{u,k,\max} = 30\; \forall k \in \mathbb{K}, \alpha = 0, \epsilon_{\mathbf{R}} = 10^{-3}, \epsilon_{\text{coeff}} = 10^{-4}, \rho = 2, k_{\max} = 20$, and $\epsilon_{ES} = 0.2$. All state and control coefficients were initially set to zero. The computational algorithm breaks the transformed interval $[-1,1]$ into the following four domains $\bm{\Omega}_1 = [-1, 0.2361], \bm{\Omega}_2 = [0.2361,0.4721], \bm{\Omega}_3 = [0.4721,0.7082]$, and $\bm{\Omega}_4 = [0.7082,1]$; thus $K = 4$. These four mesh intervals correspond to the four domains $\bm{\Omega}_{1,\text{orig}} = [0, 1.236], \bm{\Omega}_{2,\text{orig}} = [1.236,1.472], \bm{\Omega}_{3,\text{orig}} = [1.472,1.708]$, and $\bm{\Omega}_{4,\text{orig}} = [1.708,2]$ of the original optimal control problem. Figure \ref{Ex3_All} shows the plots of the approximate state function, $x(t)$, exact and approximate control functions, $u^*(t)$ and $u(t)$, respectively, on the interval $[0,2]$. The corresponding approximate optimal cost function value rounded to 12 decimal digits is $J^{(\alpha),*}_{N,L_x,L_u} \approx -0.666666666648$-- an approximation that is accurate to $10$ significant digits using merely $7$ shifted Gegenbauer coefficients for the state and control trajectories and $6$ shifted Gegenbauer-Gauss collocation points in each element. Figure \ref{Ex3_u_err} shows the absolute error of the control function values, $\left| {{u^*}(t) - u(t)} \right|$, on the interval $[0,2]$ in log-lin scale, where again we can clearly observe an exponential error decay. In contrast, Figure \ref{Ex3_u_err_single} shows the slow rate of convergence associated with the absolute error on the interval $[0,2]$ in log-lin scale using the same parameter settings, but with a single collocation grid. 

\begin{figure}[ht]
\centering
\includegraphics[scale=0.7]{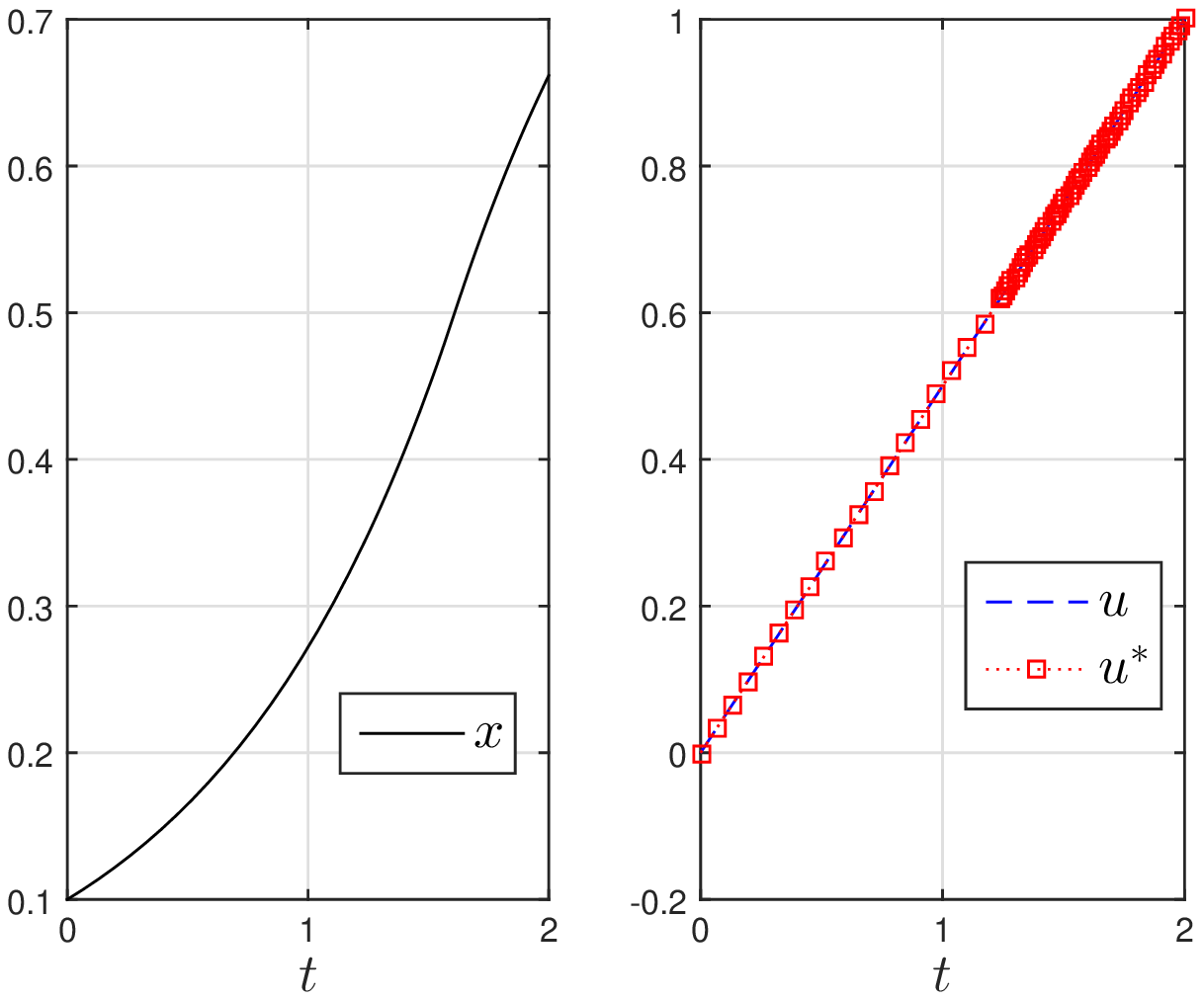}
\caption{The figure shows the plots of the approximate state function, $x(t)$ (left), exact and approximate control functions, $u^*(t)$ and $u(t)$ (right), respectively, of Example 3 on the interval $[0,2]$ using the initial values $N_k = 5, L_{x,k} = L_{u,k} = 6$, for all $ k \in \mathbb{K}$, and $\alpha = 0$. The plots were generated using $20$ linearly spaced nodes in each domain $\bm{\Omega}_{k,\text{orig}}$, for all $ k \in \mathbb{K}$.}
\label{Ex3_All}
\end{figure}
\begin{figure}[ht]
\centering
\subfigure[]{
\includegraphics[scale=0.525]{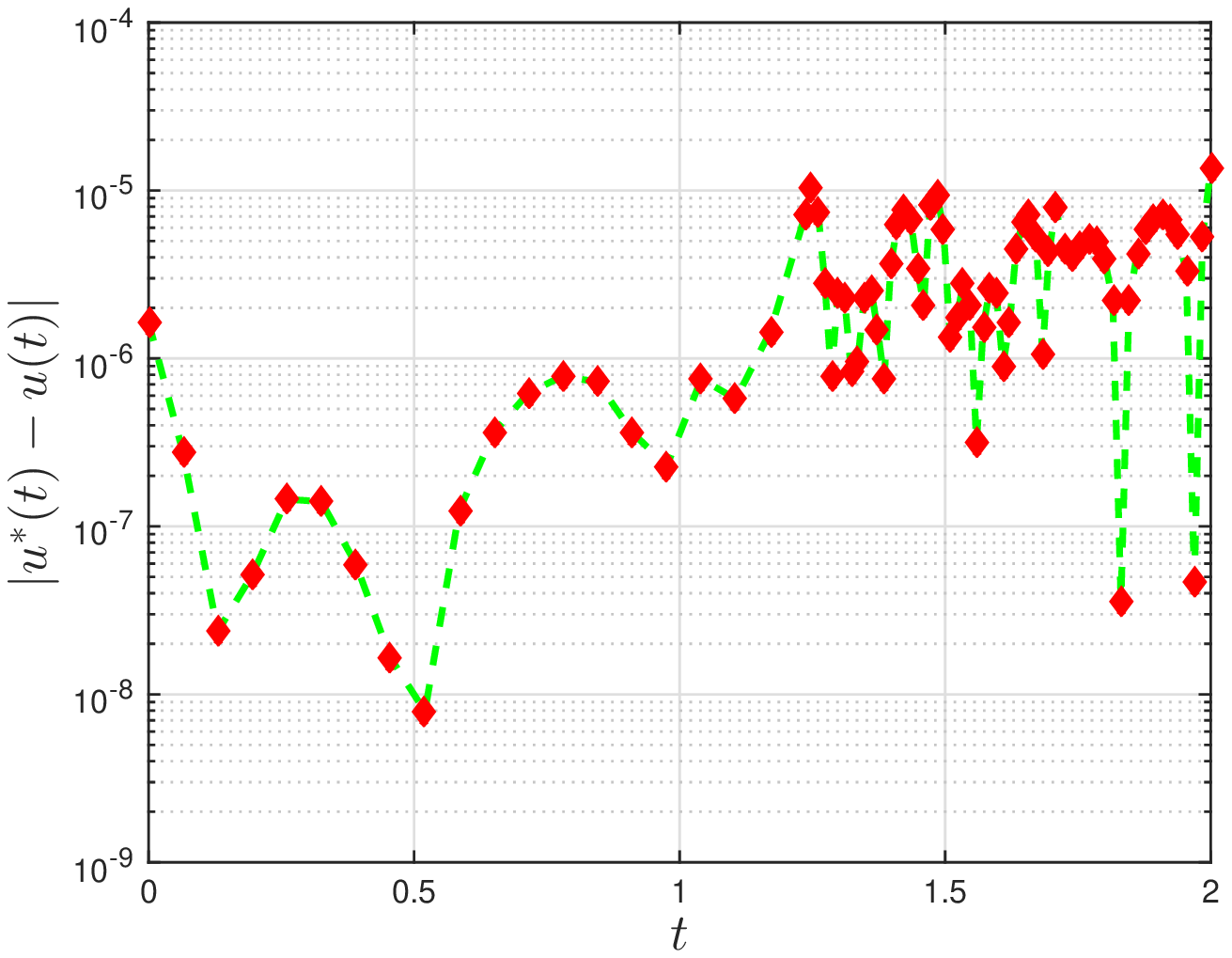}
\label{Ex3_u_err}
}
\subfigure[]{
\includegraphics[scale=0.525]{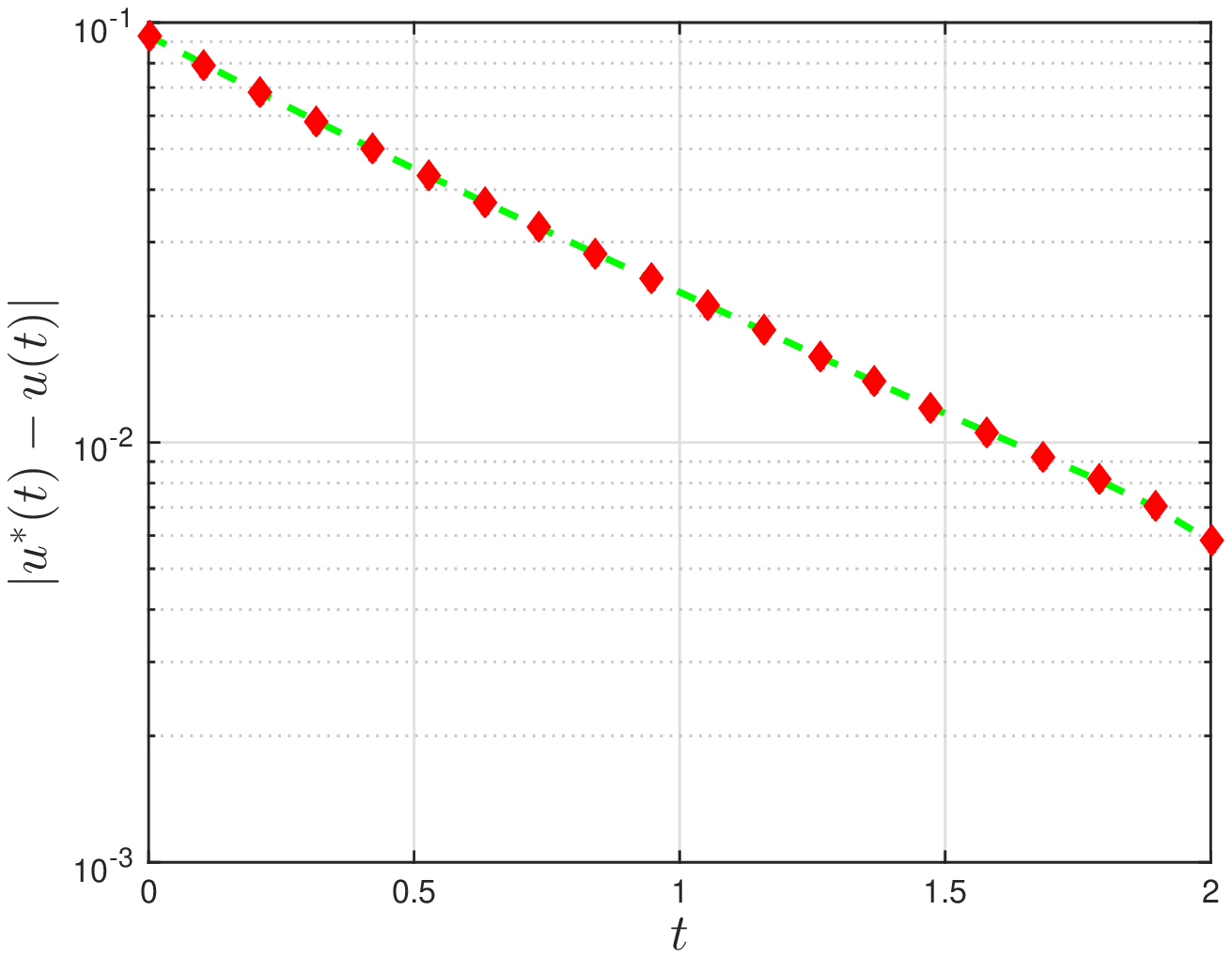}
\label{Ex3_u_err_single}
}
\caption{Figure \subref{Ex3_u_err} shows the absolute error of the control function values, $\left| {{u^*}(t) - u(t)} \right|$, of Example 3 on the interval $[0,2]$ in log-lin scale using the initial values $N_k = 5, L_{x,k} = L_{u,k} = 6$, for all $ k \in \mathbb{K}$, and $\alpha = 0$. The plot was generated using $20$ linearly spaced nodes in each domain $\bm{\Omega}_{k,\text{orig}}$, for all $ k \in \mathbb{K}$. Figure \subref{Ex3_u_err_single} shows the absolute error of the control function values $\left| {{u^*}(t) - u(t)} \right|$ on the interval $[0,2]$ in log-lin scale using a single collocation grid. The plot was generated using $20$ linearly spaced nodes in $[0,2]$.}
\label{fig:Ex3specific1}
\end{figure}
\section{Limitations}
The current GISE method was tested on only three numerical test problems in an attempt to reduce the size of the manuscript. However, further test problems may be necessary to verify further the power of the proposed method. Moreover, a further theoretical study may be conducted to analyze the convergence of the GISE method. 
\section{Conclusion}
\label{conc}
Motivated by the spectral accuracy offered by spectral element methods, we have proposed a fast, economic, and high-order algorithm for the solution of nonlinear optimal control problems exhibiting smooth/nonsmooth solutions. The coalition of information derived from the residual of the discrete dynamical system equations and the magnitude of the last coefficients in the state and control truncated series forms a powerful multicriterion adaptive strategy to boost the accuracy of the state and control approximations. Another major source for the strength of the proposed method lies in the free rectangular form of the KESOBGIM, which allows for excellent approximations to integrals with accuracy approaching machine precision. Remarkably, this significant result is achieved regardless of the number of collocation points used in the discretization process. The numerical experiments support collocations of nonlinear optimal control problems performed at $\mathbb{S}_{N_k}^{(\alpha^{(k)})}: \alpha^{(k)} \in I_{\varepsilon,r}^G$, for all $ k \in \mathbb{K}$, for small/medium numbers of collocation points and Gegenbauer expansion terms. The proposed method can be easily extended to different problems and applications.

\end{document}